\documentclass[10pt]{amsart}

\usepackage{amsmath}
\usepackage{amssymb}
\usepackage{bm}
\usepackage{graphicx}
\usepackage{psfrag}
\usepackage{color}
\usepackage{xcolor}
\usepackage{float}
\usepackage{breqn}

\definecolor{amethyst}{HTML}{8b5cf6}
\definecolor{darren}{HTML}{8b5cf6}

\definecolor{orange}{rgb}{0.988, 0.416, 0.012}
\definecolor{neo}{rgb}{0.988, 0.416, 0.012}

\definecolor{brown}{rgb}{0.588, 0.337, 0.208}
\definecolor{grant}{rgb}{0.588, 0.337, 0.208}

\definecolor{red}{rgb}{1,0,0}
\definecolor{felix}{rgb}{1,0,0}

\usepackage{hyperref}
\definecolor{MITred}{RGB}{163,31,52}
\hypersetup{
  colorlinks=true,
  linkcolor=MITred,
  citecolor=MITred,
  urlcolor=MITred
}
\usepackage{url}
\usepackage{algpseudocode}
\usepackage{fancyhdr}
\usepackage{mathtools}
\usepackage{tikz-cd}
\usepackage{xy}
\input xy
\xyoption{all}
\usepackage{stmaryrd}
\usepackage{calrsfs}

\newtheorem{theorem}{Theorem}[section]
\newtheorem{lemma}[theorem]{Lemma}
\newtheorem{proposition}[theorem]{Proposition}

\theoremstyle{definition}
\newtheorem{definition}[theorem]{Definition}

\theoremstyle{remark}

\numberwithin{equation}{section}

\newcommand{\aaa}{\mathbb{A}}
\newcommand{\cc}{\mathbb{C}}
\newcommand{\ff}{\mathbb{F}}
\newcommand{\nn}{\mathbb{N}}
\newcommand{\pp}{\mathbb{P}}

\newcommand{\qq}{\mathbb{Q}}
\newcommand{\rr}{\mathbb{R}}
\newcommand{\zz}{\mathbb{Z}}

\newcommand{\mcd}{\mathrm{mcd}}
\providecommand\ldb{\llbracket}
\providecommand\rdb{\rrbracket}

\newcommand{\gp}{\mathcal{G}}

\newcommand{\ord}{\mathrm{ord}}

\newcommand{\uu}{\mathcal{U}}

\keywords{monoid algebra, divisibility, maximal common divisor, MCD domain, MCD-finite domain, atomic domain, q-GCD domain}

\subjclass[2020]{Primary: 13A05, 13F15, 20M25; Secondary: 13G05, 20M13, 06F05}

\begin{document}

\mbox{}
\title{Maximal common divisors in monoid algebras}

\author{Grant Blitz}
\address{PRIMES-USA\\MIT\\Cambridge, MA 02139}
\email{grantstar16@gmail.com}

\author{Felix Gotti}
\address{Department of Mathematics\\MIT\\Cambridge, MA 02139}
\email{fgotti@mit.edu}

\author{Darren Han}
\address{Department of Mathematics\\MIT\\Cambridge, MA 02139}
\email{drh@mit.edu}

\author{Hengrui Liang}
\address{PRIMES-USA\\MIT\\Cambridge, MA 02139}
\email{spaceblastxy1@gmail.com}

\date{\today}

\begin{abstract}
  We say that a commutative monoid $M$ has the MCD property if every nonempty finite subset of $M$ has a maximal common divisor (MCD), and we say that $M$ has the MCD-finite property if every nonempty finite subset of $M$ has only finitely many MCDs up to associates. It is well known that every monoid that satisfies the ascending chain condition on principal ideals is an MCD monoid, while every finite factorization monoid is an MCD-finite monoid. In this paper, we study the MCD and MCD-finite properties in the setting of monoid algebras. After identifying a class of rank-$1$ torsion-free MCD monoids, we investigate the ascent of the MCD property to monoid algebras over fields, proving that if a pre-Schreier monoid has the MCD property then its monoid algebras over any field also have the MCD property. Then we prove that, unlike for the case of polynomial extensions, the property of being atomic does not ascend to monoid algebras over fields even when restricted to the class of MCD monoids. In the second part of the paper, we first identify a class of rank-$1$ torsion-free MCD-finite monoids. Then we establish the ascent of the MCD-finite property to polynomial extensions. We conclude the paper proving that the q-GCD property (i.e., the condition that every nonempty finite subset has at most one MCD), which is a condition stronger than the MCD-finite property, does not ascend to monoid algebras over fields even when restricted to the class of rank-$1$ torsion-free monoids.
\end{abstract}

\bigskip
\maketitle

\bigskip
\section{Introduction}
\label{sec:intro}

The ascent of algebraic properties to polynomial extensions is a central problem in commutative ring theory, as polynomial rings are among the most fundamental ring extensions. Indeed, several milestones in the development of modern algebra concern the ascent of ideal-theoretic or factorization properties to polynomial extensions. For instance, in 1801, Carl Gauss~\cite{cG1801} established that unique factorization ascends to polynomial extensions; equivalently, the polynomial ring over a unique factorization domain (UFD) is again a UFD. Another important historical example is Hilbert's Basis Theorem, established by David Hilbert~\cite{dH1890} in 1890, which states that the Noetherian property (i.e., the condition that every ascending chain of ideals stabilizes) ascends to polynomial extensions of commutative rings. Hilbert's proof is a landmark in the emergence of modern abstract algebra because it helped shift the mathematical paradigm away from exhaustive computation and toward the study of structural properties.
\smallskip

Given a commutative ring $R$ and a commutative monoid $M$, the monoid algebra $R[M]$ of $M$ with coefficients in $R$ is a natural generalization of the classical polynomial ring $R[x]$: its elements are polynomial-like expressions whose supports are finite subsets of $M$. In particular, $R[\nn_0]$ is naturally isomorphic to $R[x]$. We say that a property ascends to monoid algebras over a field $F$ if, whenever a monoid $M$ has the property, the multiplicative monoid of nonzero elements of $F[M]$ also has the property. The MCD and MCD-finite conditions are the two central properties considered in this paper: they concern, respectively, the existence and finiteness of maximal common divisors of finite subsets of commutative monoids and integral domains. The primary purpose of this paper is to study these divisibility properties in monoid algebras, with particular emphasis on their ascent to polynomial extensions and to monoid algebras over fields.
\smallskip

Let $M$ be a commutative cancellative monoid. We say that $M$ is a unique factorization monoid (UFM) if it satisfies the statement of the Fundamental Theorem of Arithmetic, while $M$ is called atomic if it satisfies the existential part of that statement. In a UFM, every nonempty finite subset has a greatest common divisor (GCD). Motivated by the existence of GCDs, we can define a class of monoids that contains all UFMs: following Kaplansky~\cite{iK70}, we say that $M$ is a GCD monoid if every nonempty finite subset of $M$ has a GCD. An integral domain is called a GCD domain if its multiplicative monoid of nonzero elements is a GCD monoid. The class of GCD domains contains all B\'ezout domains and UFDs. Given the importance of GCDs, both GCD monoids and GCD domains have been extensively studied (see~\cite{dA00,LP20} and the references therein).
\smallskip

A common divisor $d \in M$ of a nonempty finite subset $S$ in $M$ is called a maximal common divisor (MCD) of $S$ in $M$ provided that the only common divisors of the subset
\[
  S/d := \Big\{\frac{s}d : s \in S \Big\}
\]
are the units of $M$. The monoid $M$ is an MCD monoid (or has the MCD property) if every nonempty finite subset of $M$ has an MCD. An integral domain is an MCD domain (or has the MCD property) if its multiplicative monoid of nonzero elements is an MCD monoid. One can readily check that every monoid or integral domain satisfying the ascending chain condition on principal ideals (ACCP) has the MCD property.
\smallskip

Roitman studied the MCD property in 1993 in connection with atomicity in polynomial extensions. The ascent problem for atomicity was raised by Gilmer~\cite[page~189]{rG84} in the setting of monoid algebras and later posed for polynomial extensions by Anderson, Anderson, and Zafrullah~\cite[Question~2]{AAZ90}. Roitman~\cite{mR93} answered the latter question negatively by constructing an atomic domain whose polynomial ring is not atomic. In the same paper, he proved that the MCD property provides a natural sufficient condition for atomicity to ascend to polynomial extensions. More recently, the MCD property has been considered by Dani et al.~\cite{DGHLS25} in the setting of commutative power monoids, by Liang, Wang, and Zhong~\cite{LWZ24} in the setting of rank-$1$ torsion-free commutative monoids, and by Gotti and Polo~\cite{GP23} in the setting of semidomains.
\smallskip

In Section~\ref{sec:the MCD property}, we study the MCD property. We first identify two classes of rank-$1$ torsion-free MCD monoids, one of which generalizes a class considered in~\cite[Section~5]{LWZ24}. We then investigate the ascent of the MCD property to monoid algebras over fields. We prove that if a pre-Schreier monoid $M$ has the MCD property, then $F[M]$ has the MCD property for every field $F$. Thus, the MCD property ascends to monoid algebras over fields within the class of pre-Schreier monoids (i.e., commutative monoids whose elements are primal). We then consider the ascent of atomicity to monoid algebras over fields. Motivated by a construction used in~\cite{CG19} and refined in~\cite{GGP25}, we prove that, unlike in the case of polynomial extensions, atomicity does not ascend to monoid algebras over fields even within the class of MCD monoids. It remains open whether, for a fixed field $F$, atomicity ascends to monoid algebras with coefficients in $F$ within the class of MCD monoids.
\smallskip

We say that $M$ is an MCD-finite monoid (or has the MCD-finite property) if every nonempty finite subset of $M$ has only finitely many MCDs up to associates. An integral domain is an MCD-finite domain (or has the MCD-finite property) if its multiplicative monoid of nonzero elements is an MCD-finite monoid. The class of MCD-finite monoids contains two important subclasses: pre-Schreier monoids and finite factorization monoids (i.e., atomic monoids in which every element has only finitely many factorizations).
\smallskip

The term MCD-finite was introduced, and the corresponding property first studied, by Eftekhari and Khorsandi~\cite{EK18} in connection with the ascent of the IDF property. They proved that, for an IDF domain, MCD-finiteness characterizes the ascent of the IDF property to polynomial extensions. An integral domain has the IDF property if every nonzero element has only finitely many irreducible divisors up to associates. The question of whether the IDF property ascends to polynomial extensions was posed in~\cite[Question~2]{AAZ90} and answered negatively by Malcolmson and Okoh~\cite{MO09}, who also proved that it ascends within the class of GCD domains. The ascent of the IDF property has also been considered more recently in~\cite{GZ23,GP25}. Du and Gotti~\cite{DG26} recently studied the MCD-finite property in connection with the unrestricted finite factorization property.
\smallskip

In Section~\ref{sec:the MCD-finite property}, we investigate the MCD-finite property. We first revisit the prime reciprocal monoids introduced in Section~\ref{sec:the MCD property} and prove that they have the MCD-finite property, even though they are not finite factorization monoids and do not satisfy the ACCP. We then establish that the MCD-finite property ascends to polynomial extensions. We conclude the section by considering the q-GCD property, which strengthens the MCD-finite property: a monoid is q-GCD if every nonempty finite subset has at most one MCD up to associates. We prove that the q-GCD property does not ascend to monoid algebras over fields. More precisely, for every prime $p$, we construct a rank-$1$ cancellative torsion-free q-GCD monoid $M$ such that $\ff_p[M]$ is not a q-GCD domain.

\bigskip
\section{Background}
\label{sec:background}

\smallskip
\subsection{General Notation}

As is customary, $\zz$, $\qq$, $\rr$, $\aaa$, and $\cc$ will denote the sets of integers, rational numbers, real numbers, algebraic complex numbers, and complex numbers, respectively. We let $\nn$ and $\nn_0$ denote the sets of positive and nonnegative integers, respectively. In addition, we let $\pp$ denote the set of primes. For $p \in \pp$ and $n \in \nn$, we let $\ff_{p^n}$ be the finite field of cardinality $p^n$. For $a,b \in \zz$ with $a \le b$, we let $\ldb a,b \rdb$ denote the set of integers between $a$ and $b$, i.e.,
\[
  \ldb a,b \rdb := \{n \in \zz : a \le n \le b\}.
\]
In addition, for $S \subseteq \rr$ and $r \in \rr$, we set $S_{\ge r} := \{s \in S : s \ge r\}$ and $S_{> r} := \{s \in S : s > r\}$.

\medskip
\subsection{Commutative Monoids}

A multiplicatively written commutative semigroup $S$ is called \emph{cancellative} if for all $a,b,c \in S$ the equality $ab = ac$ implies that $b=c$. Although a monoid is usually defined to be a semigroup with an identity element, for the purposes of this paper the term \emph{monoid} refers to a cancellative and commutative semigroup with an identity element.
\smallskip

Let $M$ be a multiplicatively written monoid. We let $M^\bullet := M \setminus \{1\}$. The \emph{group of units} of~$M$ is the abelian group consisting of all invertible elements of~$M$, which is denoted by either $M^\times$ or $\uu(M)$. We say that $M$ is \emph{reduced} if the group of units of $M$ is trivial. The quotient monoid $M/M^\times$ is a reduced monoid, which is denoted by $M_{\text{red}}$ and called the \emph{reduced monoid} of $M$. If $M$ is reduced then we can naturally identify each element $a \in M$ with its class $aM^\times$ in $M_{\text{red}}$.
\smallskip

As the monoid $M$ is cancellative, it can be minimally embedded into an abelian group $\gp(M)$, which is called its \emph{Grothendieck group}: $\gp(M)$ is the abelian group (unique up to isomorphism) such that any abelian group containing an isomorphic copy of $M$ also contains an isomorphic copy of $\gp(M)$. The \emph{rank} of $M$ is defined to be the dimension of the $\qq$-vector space $\qq \otimes_\zz \gp(M)$ (clearly the rank of~$M$ equals the rank of $\gp(M)$). It follows from~\cite[Section~24]{lF70} and~\cite[Theorem~2.9]{rG84} that if $M$ is torsion-free (and cancellative) then $M$ has rank $1$ if and only if it is isomorphic to an additive submonoid of $\qq$. Additive submonoids of the nonnegative cone of $\qq$ have been actively investigated during the last decade: see~\cite{CGGP25} for a recent study of their atomic structure and factorization theory, and see~\cite{BGLZ24,GG25,GGP25,GR25} for recent studies of their monoid algebras.

\medskip
\subsection{Divisibility}

For $a,b \in M$, we say that $b$ \emph{divides} $a$ and write $b \mid_M a$ if there exists $c \in M$ such that $a = bc$. For $b,c \in M$, if both relations $b \mid_M c$ and $c \mid_M b$ hold then $b$ and $c$ are called \emph{associates}. Let $S$ be a nonempty subset of $M$. An element $d \in M$ is called a \emph{common divisor} of $S$ if $d \mid_M s$ for all $s \in S$. A common divisor $d \in M$ is called a \emph{greatest common divisor} (GCD) of $S$ if every other common divisor of $S$ divides $d$: we let $\gcd_M(S)$ denote the set of all GCDs of $S$, dropping the subscript $M$ when we see no risk of ambiguity (observe that any two GCDs of $S$ are clearly associates).
\smallskip

A common divisor $d \in M$ of $S$ is called a \emph{maximal common divisor} (MCD) of $S$ if the only common divisors of the set $S/d := \{s/d : s \in S\}$ are the units of~$M$. We let $\mcd_M(S)$ denote the set consisting of all the MCDs of $S$ in $M$, dropping the subscript $M$ when we see no risk of ambiguity. If every nonempty finite subset of $M$ has an MCD, we say that $M$ is an \emph{MCD monoid}. Following~\cite{EK18}, we say that $M$ is an \emph{MCD-finite monoid} if every nonempty finite subset of~$M$ has only finitely many MCDs up to associates. When a monoid is MCD-finite, we also say that it has the \emph{MCD-finite property}. If every nonempty finite subset of $M$ has at most one MCD up to associates then we say that $M$ is a \emph{q-GCD monoid} (or a \emph{quasi-GCD monoid}). It follows directly from the corresponding definitions that every q-GCD monoid has the MCD-finite property.
\smallskip

The monoid $M$ is called a \emph{pre-Schreier monoid} if each element $a \in M$ is \emph{primal}, which means that, for all $b,c \in M$ such that $a \mid_M bc$, we can write $a = b' c'$ for some elements $b', c' \in M$ such that $b' \mid_M b$ and $c' \mid_M c$. Clearly, every unit is primal. If a monoid is pre-Schreier then we say that it has the \emph{pre-Schreier property}. A monoid has the \emph{GL property} if for every $a \in M$ the following condition holds: if $a \mid_M bc$ for some $b,c \in M$ then there exists $d \in M \setminus M^\times$ with $d \mid_M a$ such that $d \mid_M b$ or $d \mid_M c$.

\medskip
\subsection{Atomicity and Factorizations}

An element $a \in M \setminus M^\times$ is called an \emph{atom} if for all $b,c \in M$, the equality $a = bc$ implies that $M^\times \cap \{b,c\}$ is nonempty. We let $\mathcal{A}(M)$ denote the set consisting of all the atoms of $M$. An element $b \in M$ is called \emph{atomic} if either $b$ is a unit or $b$ factors into atoms (allowing repetitions). Following Cohn~\cite{pC68}, we say that the monoid~$M$ is \emph{atomic} if every element of~$M$ is atomic. In addition, $M$ is called \emph{strongly atomic} if $M$ is atomic and, for any $b,c \in M$, the subset $\{b,c\}$ of $M$ has an MCD.
\smallskip

An element $p \in M \setminus M^\times$ is called a \emph{prime} if for all $b,c \in M$ such that $p \mid_M bc$, either $p \mid_M b$ or $p \mid_M c$. As $M$ is assumed to be cancellative, every prime element of $M$ is an atom. A monoid is called an \emph{AP monoid} if every atom is prime. Following Coykendall, Dobbs, and Mullins~\cite{CDM99}, we say that $M$ is \emph{antimatter} if $\mathcal{A}(M)$ is the empty set. Observe that every antimatter monoid is trivially an AP monoid. Following Grams and Warner~\cite{GW75}, we say that~$M$ is an \emph{irreducible-divisor-finite monoid} if every nonunit of $M$ is divisible by an atom.
\smallskip

Assume for the rest of this section that $M$ is an atomic monoid. Let $\mathsf{Z}(M)$ denote the free commutative monoid on $\mathcal{A}(M_{\text{red}})$, and let $\pi \colon \mathsf{Z}(M) \to M_{\text{red}}$ denote the only monoid homomorphism that fixes every element of the set $M_{\text{red}}$. For each $b \in M$, we set
\[
  \mathsf{Z}(b) = \mathsf{Z}_M(b) = \pi^{-1} (bM^\times).
\]
An element $b \in M$ is called \emph{factorial} if $\mathsf{Z}_M(b)$ is a singleton, which means that $b$ has a unique factorization in $M$. Then $M$ is called a \emph{unique factorization monoid} (UFM) if every element of $M$ is factorial. It is well known that every UFM is a GCD monoid: indeed, a monoid is UFM if and only if it is an atomic GCD monoid.
\begin{figure}[ht]
  \begin{tikzcd} [cramped]
    \textbf{ GCD } \arrow[r, Rightarrow] \arrow[red, r, Leftarrow, "/"{anchor=center,sloped}, shift left=1.5ex] & \textbf{ pre-Schreier }  \arrow[r, Rightarrow] \arrow[red, r, Leftarrow, "/"{anchor=center,sloped}, shift left=1.5ex] &
    \textbf{ GL }  \arrow[r, Rightarrow] \arrow[red, r, Leftarrow, "/"{anchor=center,sloped}, shift left=1.5ex] &\textbf{AP}
  \end{tikzcd}
  \caption{Three properties that generalize the GCD property.}
  \label{fig:generalized-gcd diagram}
\end{figure}

\medskip
\subsection{Integral Domains}

A commutative ring with identity is called an \emph{integral domain} if its only zero-divisor is $0$. 
\smallskip

Let $R$ be an integral domain. We let $R^\times$ denote the group of units of $R$, while we let $R^*$ denote the multiplicative monoid consisting of all nonzero elements of~$R$. Observe that the nonzero principal ideals of $R$ can be obtained by joinning $\{0\}$ to the principal ideals of the monoid $R^*$, whence we see that $R$ satisfies the ACC on principal ideals if and only if $R^*$ satisfies the ACCP, in which case, we say that $R$ satisfies the \emph{ACCP}. Notice also that an integral domain $R$ is a UFD if and only if $R^*$ is a UFM.
\smallskip

Atomic and divisibility properties also tranfer naturally from $R^*$ to $R$ as eveyr nonzero element of $R$ is an atom/prime in the monoid $R^*$ if and only if it is so in the integral domain $R$. For any $r,s \in R^*$ with $r \mid_{R^*} s$ we write $r \mid_R s$. In general, we can adapt every monoidal property to the setting of integral domains through their multiplicative monoids. For instance, we say that an integral domain $R$ is an \emph{MCD domain} (resp., \emph{MCD-finite domain}) provided that $R^*$ is an MCD monoid (resp., MCD-finite monoid). We can do the same for the rest of the monoidal properties we have previously defined to define the notions of a q-GCD domain, a pre-Schreier domain, a GL domain, and an AP domain, an atomic domain, a strongly atomic, and IDF domain.

\medskip
\subsection{Monoid Algebras} 

For the rest of this section, let $M$ be a monoid, which is written additively. For an indeterminate $x$, the commutative ring consisting of all polynomial expressions in~$x$ with exponents in $M$ and coefficients in $R$ is called the \emph{monoid algebra} (or \emph{monoid domain}) of~$M$ over~$R$. Following the notation in~\cite{rG84}, we will denote the monoid algebra of~$M$ over $R$ by $R[x;M]$, although we simply write $R[M]$ if we see no danger of ambiguity. If $M$ is torsion-free (in addition to being cancellative and commutative) then $R[M]$ is an integral domain with
\[
  R[M]^\times = \{ux^m : u \in R^\times \text{ and } m \in M^\times \}
\]
(see~\cite[Theorems~8.1 and 11.1]{rG84}). We assume, for the rest of this section, that $M$ is a torsion-free monoid and, therefore, that $R[M]$ is an integral domain. 
\smallskip

We say that $M$ is a \emph{linearly ordered monoid} with respect to a given total order relation $\preceq$ if for all $b,c,d \in M$ the inequality $b \prec c$ implies that $b+d \prec c+d$. It follows from Levi's theorem~\cite{fL13} that a commutative semigroup $S$ with an identity element can be turned into a linearly ordered monoid if and only if $S$ is both cancellative and torsion-free. Thus, as we are assuming that $M$ is torsion-free, we can assume the existence of a total order $\preceq$ on~$M$ that is compatible with its operation.
\smallskip

Let $M$ be a linearly ordered monoid under a total order relation, which we denote by~$\preceq$. Then we can write any nonzero polynomial expression $f$ in the monoid algebra $R[M]$ as follows:
\[
  f := c_1 x^{m_1} + \dots + c_k x^{m_k}
\]
for some nonzero $c_1, \dots, c_k \in R$ and exponents $m_1, \dots, m_k \in M$ with $m_1 \succ \dots \succ m_k$. In this case, we call $\text{supp} \, f := \{m_1, \dots, m_k\}$ the \emph{support} of $f$, while we refer to $\deg f := m_1$ and $\text{ord} \, f := m_k$ as the \emph{degree} and \emph{order} of $f$, respectively. Notice that when the monoid $M$ is $\nn_0$ under the usual order, we recover the standard notions of degree, order, and support for standard polynomials in~$R[x]$. 
\smallskip

It is well known that a monoid algebra $R[M]$ is a GCD domain if and only if $R$ is a GCD domain and $M$ is a GCD monoid. It follows from~\cite[Proposition 8.3]{GP74} that when a monoid algebra $R[M]$ is an integral domain, $\dim R[M] \ge 1 + \dim R$. Moreover, when $R$ is a Noetherian domain and $M$ is a torsion-free monoid, it follows from~\cite[Corollary~2]{jO88} that $\dim R[M] = \dim R + \text{rank} \, M$, whence $R[M]$ has Krull dimension~$1$ if and only if~$R$ is a field and $M$ is a rank-$1$ torsion-free monoid (every rank-$1$ torsion-free monoid is isomorphic to an additive submonoid of $\qq$; see, for instance,~\cite[Section~24]{lF70}). The class of one-dimensional monoid algebras is central to this paper.
\smallskip

Background information on monoid algebras with emphasis on the ascent of algebraic properties from $(M,R)$ to $R[M]$, including the most significant progress on the same subject until 1984, can be found in Gilmer's book~\cite{rG84}.

\bigskip
\section{The MCD Property}
\label{sec:the MCD property}

This section is devoted to investigating the existence of MCDs, giving special attention to the class of rank-$1$ torsion-free monoids.

\medskip
\subsection{Two Classes of \texorpdfstring{Rank-$1$}{Rank-1} MCD Monoids}

First, we identify two classes of MCD rank-$1$ monoids; one of these classes consists of atomic monoids while the other consists of antimatter monoids (i.e., monoids having no atoms).

\smallskip
\subsubsection{A Class of Atomic MCD Monoids}

We start by identifying a class of atomic MCD rank-$1$ monoids. Our motivation comes from~\cite{GL22a} and~\cite{LWZ24}, where special subclasses were considered and proved to be atomic MCD monoids. Fix a pair $(P,k)$, where $P$ is an infinite set of primes and $k \in \nn$, and let $(p_n)_{n \ge 1}$ be the strictly increasing sequence with underlying set $P$. Throughout the first part of this section, we will consider the following rank-$1$ torsion-free monoid:
\begin{equation} \label{eq:main monoid 1}
  M_{P,k} := \Big\langle \tfrac{1}{p_j p_{j+2} \dots p_{j+2k}} : j \in \nn \Big\rangle \subseteq \qq_{\ge 0}.
\end{equation}
To be consistent with~\cite{LWZ24}, we call $M_{P,k}$ the \emph{prime reciprocal monoid} induced by the pair~$(P,k)$. One can readily verify that the set of atoms of $M_{P,k}$ is the defining generating set
\begin{equation} \label{eq:atoms of M_P}
  \mathcal{A}(M_{P,k}) = \Big\{ a_{j,k} :=  \frac1{p_j p_{j+2} \cdots p_{j+2k}} : j \in \nn \Big\}.
\end{equation}
On the other hand, $M_{P,k}$ does not satisfy the ACCP because $\big( \frac1{p_{2n}} + M_{P,k} \big)_{n \ge 1}$ is an ascending chain of principal ideals that does not stabilize. Indeed, for each $n \in \nn$, the identity
\[
  \frac1{p_{2n}} = \frac1{p_{2n+2}} + (p_{2n+2} - p_{2n})p_{2n+4} \cdots p_{2n+2k} \frac1{p_{2n} p_{2n+2} \dots p_{2n + 2k}}
\]
holds and, therefore, $\frac1{p_{2n+2}} \mid_{M_{P,k}} \frac1{p_{2n}}$. It turns out that the monoid $M_{P,k}$ satisfies the almost ACCP. Before proving this, it is convenient to show that every element of $M_{P,k}$ has a certain canonical sum decomposition (we assume that the sum over an empty index set is $0$).

\begin{lemma}[Canonical Sum Decomposition] \label{lem:CSD 1}
  For an infinite subset $P$ of $\pp$ and $k \in \nn$, let $M_{P,k}$ be as defined in~\eqref{eq:main monoid 1}. Then every element $r \in M_{P,k}^\bullet$ can be written uniquely as
  \begin{equation}\label{eq:gen-decomp}
    r = c_{k-1}(r) + \sum_{j=1}^{n(k)} c_j(r) a_{j,k}
  \end{equation}
  for an index $n(k) \in \nn_0$, an element $c_{k-1}(r) \in M_{P,k-1}$, and some coefficients $c_1(r), \dots, c_{n(k)}(r) \in \nn_0$ such that $c_j(r) \in \ldb 0, p_{j+2k} - 1 \rdb$ for every $j \in \ldb 1,n(k) \rdb$ while $c_{n(k)}(r) \ge 1$.
\end{lemma}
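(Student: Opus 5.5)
The plan has two parts: existence by a carrying (base-reduction) argument, and uniqueness by a $p$-adic valuation comparison at the largest index where two decompositions differ. The key identity is
\[
  p_{j+2k}\, a_{j,k} = \frac{1}{p_j p_{j+2} \cdots p_{j+2k-2}} = a_{j,k-1},
\]
which for $k=1$ reads $p_{j+2}a_{j,1} = 1/p_j$, the defining generator of $M_{P,0} := \langle 1/p_j : j \in \nn\rangle$. Hence $M_{P,k-1} \subseteq M_{P,k}$, and any $p_{j+2k}$ copies of the atom $a_{j,k}$ can be traded for an element of $M_{P,k-1}$.

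\emph{Existence.} Since $\mathcal{A}(M_{P,k})$ in~\eqref{eq:atoms of M_P} generates $M_{P,k}$, we can write $r = \sum_{j=1}^{N} d_j a_{j,k}$ with $d_j \in \nn_0$. For each $j$, divide $d_j = q_j p_{j+2k} + c_j$ with $c_j \in \ldb 0, p_{j+2k}-1 \rdb$. Using the identity above, $r = \sum_j q_j a_{j,k-1} + \sum_j c_j a_{j,k}$, and the first sum lies in $M_{P,k-1}$. We then take $n(k)$ to be the largest index with $c_{n(k)} \ge 1$, or $n(k)=0$ if all $c_j$ vanish; this gives~\eqref{eq:gen-decomp}.

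\emph{Uniqueness.} Suppose two decompositions of $r$ agree in $n(k)$ and coefficients only up to some point, and let $j$ be the largest index with $c_j \ne c'_j$. Subtracting gives
\[
  \sum_{i \le j} (c_i - c'_i)\, a_{i,k} = c'_{k-1} - c_{k-1} \in \gp(M_{P,k-1}).
\]
Let $q = p_{j+2k}$. Because the primes $p_n$ are distinct and increasing, $q$ divides no denominator of $a_{i,k}$ for $i<j$. Moreover $0 < |c_j - c'_j| < q$. So the left side has $q$-adic valuation exactly $-1$, and the goal is to show the right side cannot.

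\emph{Main obstacle.} This last step is where I expect trouble. The generators $a_{i,k-1}$ of $M_{P,k-1}$ with $i = j+2$ (or any $i \equiv j \pmod 2$ with $j < i \le j+2k$) do have $q$ in their denominators. So the valuation argument does not close as stated, and in fact uniqueness appears to fail without further normalization. For example, with $k=1$ we have $1/p_3 = p_1 a_{1,1}$ and $p_1 \le p_3 - 1$. This gives the decomposition with $c_0 = 0$, $n(1)=1$, $c_1 = p_1$, in addition to the one with $c_0 = 1/p_3$ and $n(1)=0$.

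To repair this, I would first try adding a normalization, for instance requiring $c_{k-1}(r)$ to be maximal with respect to $\mid_{M_{P,k}}$, or equivalently choosing the reduction so that no nonzero part of $c_{k-1}(r)$ is expressible through the atoms $a_{i,k}$ with $i \le n(k)$. I would then redo the valuation comparison one prime at a time, from the largest index downward, against this normalized $c_{k-1}$. Alternatively, I would read ``uniquely'' as uniqueness of the coefficient tuple once $c_{k-1}(r)$ is fixed. In that case the displayed identity with $c_{k-1} = c'_{k-1}$ gives the conclusion immediately via the $p_{j+2k}$-valuation argument above.
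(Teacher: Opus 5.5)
Your diagnosis is correct, and the defect it exposes lies in the statement and in the paper's proof, not in your reasoning. For every $k$ and $j$ one has $a_{j+2,k-1}=p_j\,a_{j,k}$ with $1\le p_j\le p_{j+2k}-1$. So $a_{j+2,k-1}$ has one admissible decomposition with projection $a_{j+2,k-1}$ and another with projection $0$; your example $\frac{1}{p_3}=p_1a_{1,1}$ is the case $k=j=1$. The paper's own identity showing that the ACCP fails gives, for $k=1$, a further instance: $\frac{1}{p_{2n}}=\frac{1}{p_{2n+2}}+(p_{2n+2}-p_{2n})\,a_{2n,1}$.

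The paper proves uniqueness in two steps: the $p_{n+2k}$-adic valuation gives $m=n$, and then the $p_{j+2k}$-adic valuations give $c_j=c'_j$. Both steps tacitly assume that $c_{k-1}$ and $c'_{k-1}$ are integral at these primes. That fails for exactly the reason you give: the generators $a_{i,k-1}$ with $i\equiv j \pmod 2$ and $j<i\le j+2k$ carry $p_{j+2k}$ in their denominators. Your existence argument is the paper's normalization step. The paper additionally minimizes the top index and proves $m(k)\le \ell-2k$, which the existence claim does not need.

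Your example can be made more robust. It uses top index $n(1)=0$, where for $k=1$ the condition $c_{n(k)}(r)\ge 1$ collides notationally with the projection $c_0(r)$. One might also hope to rescue uniqueness by minimizing the top index, as the paper's existence proof does. Neither loophole helps. Take $r=\frac{1}{p_3}+a_{2,1}$. It has one decomposition with projection $\frac{1}{p_3}$ and $(c_1,c_2)=(0,1)$, and another with projection $0$ and $(c_1,c_2)=(p_1,1)$. No decomposition of $r$ has top index at most $1$: the $p_2$- and $p_4$-adic parts of $\frac{1}{p_2p_4}$ would force the projection to be at least $1+\frac{1}{p_2p_4}>r$.

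On your repairs, the second reading (uniqueness of the coefficients once $c_{k-1}(r)$ is fixed) is true. However, it is just the valuation argument with the projection cancelled, and it does not yield a well-defined projection, which the paper relies on afterwards. For instance, the next lemma's claim that zero projection forces factoriality breaks down for this same $r$. It admits the zero-projection decomposition above, yet has the two factorizations $p_1a_{1,1}+a_{2,1}$ and $p_5a_{3,1}+a_{2,1}$. A normalization like your first suggestion (taking $c_{k-1}(r)$ maximal) is the kind of fix actually required. As written it is only a plan: you would still have to show that such a maximal projection exists and is unique, and then redo the valuation comparison against it.
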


\begin{proof}
  Fix a nonzero element $r \in M_{P,k}$. As $r \in M_{P,k}$, every prime dividing the denominator of $r$ lies in~$P$. Let $p_\ell$ be the largest prime dividing the denominator of $r$ and then set $n(k) := \ell - 2k$. As $M_{P,k}$ is atomic with $\mathcal{A}(M_{P,k}) = \{a_{j,k} : j \in \nn\}$, we can certainly take an index $m(k) \in \nn_0$ such that
  \begin{equation}\label{eq:first-decomp}
    r = c'_{k-1} + \sum_{j=1}^{m(k)} c'_j a_{j,k}
  \end{equation}
  for some $c'_{k-1} \in M_{P,k-1}$ and $c'_1, \dots, c'_{m(k)} \in \nn_0$ such that $c'_{m(k)} > 0$ (for instance, we can take $c'_{k-1} = 0$). Among all such representations of $r$, choose one minimizing~$m(k)$. We first argue that $m(k) \le n(k)$.
  \smallskip

  \noindent \textsc{Claim.} $m(k) \le n(k)$.
  \smallskip

  \noindent \textsc{Proof of Claim.} Assume, toward a contradiction, that $m(k) > n(k)$. Since $m(k)+2k > n(k)+2k = \ell$, the maximality of~$\ell$ ensures that $p_{m(k) + 2k}$ does not divide the denominator of~$r$. Hence the last coefficient $c'_{m(k)}$ must be divisible by $p_{m(k)+2k}$ and, after writing $c'_{m(k)} = c p_{m(k)+2k}$ for some $c \in \nn$, we obtain that
  \[
    c'_{m(k)} a_{m(k),k} = c \frac{1}{p_{m(k)} p_{m(k)+2} \dots p_{m(k)+2(k-1)}} = c a_{m(k),k-1} \in M_{P,k-1}.
  \]
  Therefore, we can replace $c'_{m(k)}$ copies of the atom $a_{m(k),k}$ by $c$ copies of the element $a_{m(k),k-1}$, which is an atom in $M_{P,k-1}$, thereby producing another sum decomposition of~$r$ of the form in~\eqref{eq:first-decomp} but with a smaller largest index, which contradicts the minimality of $m(k)$. Hence $m(k) \le n(k)$, and the claim is established.
  \smallskip

  Next, we normalize the coefficients $c'_1, \dots, c'_{m(k)}$ using the following procedure, which we call \emph{normalization}.
  \smallskip

  \noindent \textsc{Normalization.} For each $j \in \ldb 1, m(k) \rdb$, write $c'_j = b_j p_{j+2k} + c_j$ for some $b_j, c_j \in \nn_0$ with $c_j \in \ldb 0, p_{j+2k} - 1 \rdb$ and note that
  \[
    c'_j a_{j,k} = (b_j p_{j+2k} + c_j) \frac1{p_j p_{j+2} \dots p_{j+2k}} = b_j \frac{1}{p_j p_{j+2} \dots p_{j+2(k-1)}} + c_j
    \frac{1}{p_j p_{j+2} \dots p_{j+2k}} = b_j a_{j,k-1} + c_j a_{j,k}.
  \]
  Therefore, we can write the left-hand side of~\eqref{eq:first-decomp} in the following way:
  \[
    r = c'_{k-1} + \sum_{j=1}^{m(k)} c'_j a_{j,k} = \bigg( c'_{k-1} + \sum_{j=1}^{m(k)} b_j a_{j,k-1} \bigg) + \sum_{j=1}^{m(k)} c_j a_{j,k}.
  \]
  The minimality of $m(k)$ ensures that $c_{m(k)} \neq 0$, for otherwise the term $c'_{m(k)}a_{m(k),k}$ could be absorbed into $M_{P,k-1}$ and we would obtain a representation of the form in~\eqref{eq:first-decomp} with a smaller largest index.
  Observe that $c_{k-1}(r) := c'_{k-1} + \sum_{j=1}^{m(k)} b_j a_{j,k-1} \in M_{P,k-1}$ and so, after setting $c_j(r) := c_j$ for every $j \in \ldb 1, m(k) \rdb$, we obtain the desired sum decomposition of $r$, and so the existence part of the proposition follows.
  \smallskip

  To show the uniqueness of the desired sum decomposition, let us consider two sum decompositions of $r$ satisfying the desired requirements: take indices $m,n \in \nn_0$, elements $c_{k-1}, c'_{k-1} \in M_{P,k-1}$, and coefficients $c_1, \dots, c_m, c'_1, \dots, c'_n \in \nn_0$ such that
  \begin{equation} \label{eq:CSD M_{P,k} uniqueness}
    c_{k-1} + \sum_{j=1}^{m} c_j a_{j,k} = r = c'_{k-1} + \sum_{j=1}^{n} c'_j a_{j,k},
  \end{equation}
  and $c_i \in \ldb 0, p_{i + 2k} - 1 \rdb$ and $c'_j \in \ldb 0, p_{j + 2k} - 1 \rdb$ for every $(i,j) \in \ldb 1, m \rdb \times \ldb 1,n \rdb$ and $c_m, c'_n \ge 1$. Assume without loss of generality that $m \le n$, and then apply the $p_{n+2k}$-adic valuation map to both sides of~\eqref{eq:CSD M_{P,k} uniqueness} to see that $m=n$. Now we can subtract both decompositions to obtain the equality $c'_{k-1} - c_{k-1} = \sum_{j=1}^m (c'_j - c_j) a_{j,k}$ and so, for each $j \in \ldb 1,m \rdb$, we can apply the $p_{j + 2k}$-adic valuation map to verify that $c_j = c'_j$. Therefore $c_{k-1} = c'_{k-1}$, and so both decompositions of $r$ are indeed the same, which concludes our proof of uniqueness.
\end{proof}

With the notation as in \eqref{eq:gen-decomp}, we say that $c_{k-1}(r)$ is the \emph{projection of} $r$ on $M_{P,k-1}$.

\begin{lemma} \label{lem:If projection on M{P,k-1} is zero then factorial}
  For an infinite subset $P$ of $\pp$ and $k \in \nn$, let $M_{P,k}$ be as defined in~\eqref{eq:main monoid 1}. An element of $M_{P,k}$ is factorial if its projection on $M_{P,k-1}$ is zero.
\end{lemma}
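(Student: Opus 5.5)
We will show that if $c_{k-1}(r) = 0$, so that $r = \sum_{j=1}^{n} c_j a_{j,k}$ with $n = n(k)$, $c_j \in \ldb 0, p_{j+2k}-1 \rdb$ and $c_n \ge 1$, then every factorization of $r$ in $M_{P,k}$ equals $\sum_{j=1}^n c_j a_{j,k}$. Fix an arbitrary factorization $r = \sum_{j=1}^{N} d_j a_{j,k}$ with $d_j \in \nn_0$. We will compare coefficients via $p$-adic valuations, as in the uniqueness part of Lemma~\ref{lem:CSD 1}. The point is that each prime $p_{j+2k}$ with $j \ge 1$ occurs in the denominator of exactly one atom as its \emph{largest} prime. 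Primes can be shared among atoms, though: $p_{j+2k}$ also divides the denominators of $a_{j+2,k}, a_{j+4,k}, \dots$. So the valuation argument has to be run downward from the top index.

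\textbf{Step 1 (the top index).} Suppose $N > n$ and $d_N \ge 1$. Then we may assume $d_N$ is the last nonzero coefficient. The prime $p_{N+2k}$ divides the denominator of $a_{j,k}$ only for $j = N$ among $j \le N$, since the primes of $a_{j,k}$ are $p_j, \dots, p_{j+2k}$. It does not divide the denominator of $r$, because $N + 2k > n + 2k = \ell$. Applying the $p_{N+2k}$-adic valuation to $r - \sum_{j<N} d_j a_{j,k} = d_N a_{N,k}$ forces $p_{N+2k} \mid d_N$. But then $d_N a_{N,k} = (d_N/p_{N+2k})\, a_{N,k-1}$ carries a nonzero contribution to the projection on $M_{P,k-1}$.

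To make this rigorous, normalize the factorization $\sum_j d_j a_{j,k}$ exactly as in the \textsc{Normalization} step of Lemma~\ref{lem:CSD 1}. Writing $d_j = b_j p_{j+2k} + e_j$ gives a representation of $r$ of the form in \eqref{eq:gen-decomp}, with projection $\sum_j b_j a_{j,k-1}$ and reduced coefficients $e_j$, after discarding trailing zero $e_j$'s. By the uniqueness in Lemma~\ref{lem:CSD 1}, this projection must equal $c_{k-1}(r) = 0$. Since every $a_{j,k-1} > 0$, all $b_j$ vanish.

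\textbf{Step 2 (conclusion).} With all $b_j = 0$, we get $d_j = e_j \in \ldb 0, p_{j+2k}-1 \rdb$ for every $j$. Uniqueness in Lemma~\ref{lem:CSD 1} then yields $d_j = c_j$ for all $j$, with $d_j = 0$ for $j > n$. Hence $r$ has exactly one factorization, i.e., it is factorial.

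The main obstacle is making sure the normalized representation genuinely satisfies the hypotheses of the uniqueness statement in Lemma~\ref{lem:CSD 1}: the coefficients must lie in the right ranges and the last one must be nonzero. Trailing zeros are harmless, since we may truncate. The range condition holds by construction of the $e_j$. The positivity argument, that a sum of positive rationals with nonnegative integer weights is zero only if all weights are zero, is what really drives the proof. So the whole statement reduces to uniqueness plus normalization. The direct valuation argument above serves only as intuition for why the projection detects non-reduced coefficients.
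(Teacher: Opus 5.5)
Your strategy is the paper's own: normalize an arbitrary factorization, use the uniqueness part of Lemma~\ref{lem:CSD 1} to rule out carries, then match coefficients by valuations. You are also more careful than the paper in running the $p_{j+2k}$-adic comparisons from the top index downward. The gap is the load-bearing sentence of Step~1, ``by the uniqueness in Lemma~\ref{lem:CSD 1}, this projection must equal $c_{k-1}(r)=0$.'' That uniqueness is false, so the projection is not an invariant of $r$; the paper's own proof of this lemma fails at the same point. The valuation argument for uniqueness in Lemma~\ref{lem:CSD 1} ignores that the $M_{P,k-1}$-component can itself have $p_{n+2k}$ in its denominator ($a_{n+2,k-1}$ does). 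Concretely, for every $P$ and $k$,
\[
  p_1a_{1,k}=\frac{1}{p_3p_5\cdots p_{2k+1}}=a_{3,k-1}=p_{2k+3}\,a_{3,k}.
\]
So $r:=p_1a_{1,k}$ has the decomposition $0+p_1a_{1,k}$. It meets all the requirements of Lemma~\ref{lem:CSD 1} (as $1\le p_1\le p_{1+2k}-1$) and has zero projection. But $r$ also has the decomposition $a_{3,k-1}+0$. Moreover, $r$ is not factorial, since $p_1a_{1,k}$ and $p_{2k+3}a_{3,k}$ are distinct factorizations. Running your Step~1 on the factorization $p_{2k+3}a_{3,k}$ (so $N=3>n=1$) gives $b_3=1$ and projection $a_{3,k-1}\neq 0$, and nothing is contradicted. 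Your valuation heuristic correctly detects the carry, but a carry is not absurd.

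Hence, if ``projection zero'' means that $r$ admits a decomposition as in \eqref{eq:gen-decomp} with $c_{k-1}(r)=0$, the statement itself is false, and no repair of Step~1 is possible. What your argument does prove is the version whose hypothesis is that no factorization of $r$ has a coefficient of $a_{j,k}$ at least $p_{j+2k}$. Equivalently, every decomposition of $r$ as in \eqref{eq:gen-decomp} has zero $M_{P,k-1}$-part, because $a_{j,k-1}=p_{j+2k}a_{j,k}$. Under that hypothesis Step~1 is unnecessary, and Step~2 should be done directly rather than by citing Lemma~\ref{lem:CSD 1}. Suppose $\sum_{j\le N}(c_j-d_j)a_{j,k}=0$ with $|c_j-d_j|<p_{j+2k}$. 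Among $a_{1,k},\dots,a_{N,k}$ only $a_{N,k}$ involves $p_{N+2k}$, so the $p_{N+2k}$-adic valuation forces $c_N=d_N$, and one descends to $N-1$, and so on.
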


\begin{proof}
  Take $r \in M_{P,k}$ whose projection on the monoid $M_{P,k-1}$ is $0$. In light of Lemma~\ref{lem:CSD 1}, we can write $r = \sum_{j=1}^n c_j a_{j,k}$ for some $c_1, \dots, c_n \in \nn_0$ such that $c_j \in \ldb 0, p_{j + 2k} - 1 \rdb$ for every $j \in \ldb 1,n \rdb$. We can further assume that $c_n \ge 1$. Suppose that $\sum_{j=1}^m c'_j a_{j,k}$ is a factorization of $r$ in $M_{P,k}$. As the projection of $r$ on $M_{P,k-1}$ is $0$, the inequality $c'_j < p_{j + 2k}$ holds for every $j \in \ldb 1,m \rdb$ for otherwise, proceeding as in the existence part of the proof of Lemma~\ref{lem:CSD 1} we would obtain a positive projection of $r$ on $M_{P,k}$. Now, by completing with zero coefficients if necessary, we can assume that $m=n$. Finally, for each $j \in \ldb 1,n \rdb$, we can apply the $p_{j+2k}$-valuation map to both sides of the equality $0 = \sum_{j=1}^n (c'_j - c_j)a_{j,k}$ to deduce the equality $c_j = c'_j$ from the inequality $|c_j - c'_j| < p_{j+2k}$. Thus, we conclude that $r$ has only one factorization in $M_{P,k}$.
\end{proof}

Using the established canonical sum decomposition as a tool, we can now prove that $M_{P,k}$ is an MCD monoid, which generalizes~\cite[Theorem~5.3]{LWZ24}.

\begin{proposition} \label{prop:MPk-is-MCD}
  For an infinite subset $P$ of $\pp$ and $k \in \nn$, let $M_{P,k}$ be the monoid defined in~\eqref{eq:main monoid 1}. Then $M_{P,k}$ is an MCD monoid.
\end{proposition}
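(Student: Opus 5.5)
The plan is to produce an MCD of a given nonempty finite subset $S \subseteq M_{P,k}$ as a common divisor of \emph{maximal value} inside a suitably restricted finite search space. The basic observation is that, in an additive submonoid of $\qq_{\ge 0}$, if $d$ is a common divisor of $S$ and $e$ is a nonzero common divisor of $S - d$, then $d+e$ is a common divisor of $S$ with $d+e > d$. So any common divisor of largest value is an MCD. We may assume $0 \notin S$, since otherwise $0$ is an MCD. Every common divisor lies in $[0, \min S]$. If there were only finitely many common divisors, we would be done, but the failure of the ACCP shows this is false in general. The common divisors of $S$ can have arbitrarily large primes in their denominators: a summand $c\,a_{j,k}$ of $d$ can be compensated by a summand $(p_{j+2k}-c)\,a_{j,k}$ in $s-d$. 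The real work is to cut down to a finite set without losing maximality.

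First, using Lemma~\ref{lem:CSD 1}, fix $N$ larger than every index $\ell$ such that $p_\ell$ divides the denominator of some $s \in S$. Let $D_N$ be the set of common divisors of $S$ whose denominators involve only primes $p_i$ with $i \le N$. Since these denominators are bounded and the values lie in $[0,\min S]$, the set $D_N$ is finite and contains $0$. Choose $d \in D_N$ of maximum value. The goal is then to show that $S-d$ has no nonzero common divisor in $M_{P,k}$.

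Suppose $e \ne 0$ is such a common divisor. Write $e$ and each $s-d-e$ in canonical form (Lemma~\ref{lem:CSD 1}), and compare $p_i$-adic valuations for $i > N$ as in the uniqueness part of that lemma. Since $s-d$ has no $p_i$ with $i>N$ in its denominator, the high-index atom coefficients of $e$ and of $s-d-e$ must add up to multiples of $p_{i}$. This is exactly the normalization move that converts $p_{j+2k}$ copies of $a_{j,k}$ into one copy of $a_{j,k-1}$.

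I would then try to ``push down'' $e$: replace it by a nonzero element $e' \le e$ with denominator supported on primes of index $\le N$ that still divides every $s-d$. Concretely, $e'$ should be obtained by stripping the high-index tail of $e$ while transferring the corresponding mass into the projections on $M_{P,k-1}$ of the cofactors. If this succeeds, then $d+e' \in D_N$ with $d+e' > d$, contradicting the choice of $d$.

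The main obstacle is exactly this push-down step: the tail of $e$ may be all of $e$ (for instance $e = 1/p_{n}$ with $n$ huge), and the stripped element could be $0$. To handle this, I would argue by induction on $k$. The base case is $M_{P,0} = \langle 1/p_j : j \in \nn\rangle$, which satisfies the ACCP and is therefore MCD. For the inductive step, pass to the projections: the high-index part of $e$ and of $s-d-e$ only interact through elements of $M_{P,k-1}$. In that case I expect one can find a nonzero common divisor of $S-d$ inside $M_{P,k-1}$ with bounded denominator, using the inductive hypothesis (or the ACCP-type finiteness in $M_{P,k-1}$ for elements bounded by $\min S$). Lemma~\ref{lem:If projection on M{P,k-1} is zero then factorial} handles the case where $e$ has zero projection: then $e$ is factorial and its unique factorization must already appear in each $s-d$, forcing its indices to be $\le N$.
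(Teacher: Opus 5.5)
Your opening reduction is fine: a common divisor of largest value is an MCD, and $D_N$ is finite. But the entire content of the proposition is the push-down step, and the proposal only says you expect it to work. What must be shown is this: if $S-d$ has a nonzero common divisor, then it has one whose denominator involves only $p_1,\dots,p_N$. None of the tools you cite gives this.

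\emph{Valuations.} Comparing $p_i$-adic valuations for $i>N$ cannot separate the atom coefficients from the $M_{P,k-1}$-part, because $M_{P,k-1}$ contains elements with $p_i$ in the denominator for every $i$ (for instance $a_{i,k-1}$). Already for $P=\pp$ and $k=1$ one has $\tfrac{3}{10}=0+3a_{1,1}=\tfrac15+a_{1,1}$. These are two different decompositions of the form in Lemma~\ref{lem:CSD 1} with the same top index, so the valuation argument from its uniqueness part, which you want to reuse, does not go through.

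\emph{Induction.} The MCD property of $M_{P,k-1}$ is a statement about divisibility inside $M_{P,k-1}$. It provides no mechanism for moving a common divisor of $S-d$ from high to low indices. Moreover, for $k\ge 2$ the monoid $M_{P,k-1}$ fails the ACCP, so no ``ACCP-type finiteness'' is available there.

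\emph{Zero-projection case.} This case is argued incorrectly. The relation $e\mid_{M_{P,k}} t$ only says that \emph{some} factorization of $t$ contains the factorization of $e$, and $t$ is typically far from factorial. In $M_{P,1}$, the atom $a_{2n,1}$ (factorial, with zero projection) divides $1/p_{2n}=p_{2n+2}a_{2n,1}$. That element divides $1/p_2$ along the chain $1/p_{2n}\mid 1/p_{2n-2}\mid\cdots\mid 1/p_2$. So $a_{2n,1}\mid 1/p_2$ for every $n$, and factoriality of $e$ bounds nothing.

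The paper's proof avoids any push-down by imposing maximality on the $M_{P,k-1}$-component first, rather than on the value. By the inductive hypothesis it fixes an MCD $d_{k-1}$ of the projections $\{c_{k-1}(s):s\in S\}$ in $M_{P,k-1}$. It then sets $d_k:=d_{k-1}+\sum_j m_j a_{j,k}$ with $m_j:=\min_{s\in S}c_j(s)$.

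Now take a common divisor $e$ of $S-d_k$. A carry into $M_{P,k-1}$ could arise when adding $e$ to $d_k$, or at an index $j$ where the coefficient of $e$ exceeds that of every $s-d_k$. Either kind would produce a common divisor of the projections that $d_{k-1}$ properly divides, contradicting that $d_{k-1}$ is an MCD. Hence $e$ has zero projection and coefficients $b_j\le\max_s(c_j(s)-m_j)$. So $S-d_k$ has only finitely many common divisors, and a maximal one yields an MCD of $S$.

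This carry-blocking role of an MCD of the projections is the idea your proposal is missing. A $d$ chosen only by maximal value has no such control over its $M_{P,k-1}$-part, which is exactly where the high-index carries live. One caution: by the example above, $c_{k-1}(r)$ is not determined by $r$ alone. Any argument of this type therefore has to fix the decompositions and verify that the carry bookkeeping is compatible with divisibility.
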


\begin{proof}
  We proceed by induction on the parameter $k \in \nn$. The base case $k=1$ was established in~\cite[Theorem~5.3]{LWZ24}. Thus, we assume that $k \ge 2$ and that $M_{P,k-1}$ is an MCD monoid. Fix a nonempty finite subset $S = \{s_1, \dots, s_t\} \subseteq M_{P,k}$, and let us prove that $S$ has an MCD in $M_{P,k}$. We may assume that every element of $S$ is nonzero because, otherwise, $0$ is an MCD of $S$. For each $s \in S$, write its canonical sum decomposition in $M_{P,k}$ as
  \begin{equation}\label{eq:canon-sr}
    s = c_{k-1}(s) + \sum_{j=1}^{n(s)} c_j(s) a_{j,k}
  \end{equation}
  for an index $n(s) \in \nn_0$, an element $c_{k-1}(s) \in M_{P,k-1}$, and some coefficients $c_1(s), \dots, c_{n(s)}(s) \in \nn_0$ such that $c_j(s) \in \ldb 0, p_{j+2k} - 1 \rdb$ for every $j \in \ldb 1,n(s) \rdb$ and $c_{n(s)}(s) \ge 1$.
  Now set
  \[
    m := \max\{n(s) : s \in S\},
  \]
  and set $c_j(s)=0$ whenever $s \in S$ and $j \in \ldb n(s)+1,m \rdb$. By the induction hypothesis, $M_{P,k-1}$ is an MCD monoid, so we can take
  \[
    d_{k-1} \in \mcd_{M_{P,k-1}} \{c_{k-1}(s) : s \in S\} \subseteq M_{P,k}.
  \]
  For each $j \in \ldb 1,m \rdb$, set $m_j := \min\{c_j(s) : s \in S\}$, and consider the element
  \[
    d_k := d_{k-1} + \sum_{j=1}^m m_j a_{j,k} \in M_{P,k}.
  \]
  For each $s \in S$, the element $d_{k-1}$ divides $c_{k-1}(s)$ in $M_{P,k-1}$, while $\sum_{j=1}^m m_j a_{j,k}$ divides $\sum_{j=1}^m c_j(s) a_{j,k}$ in $M_{P,k}$. Hence $d_k$ is a common divisor of~$S$ in $M_{P,k}$.

  We claim that the set of common divisors of $S-d_k$ in $M_{P,k}$ is finite. Let $e$ be one such common divisor. Since $d_k+e$ is a common divisor of $S$, the normalization procedure in Lemma~\ref{lem:CSD 1} shows that the projection of $d_k+e$ on $M_{P,k-1}$ is a common divisor of $\{c_{k-1}(s) : s \in S\}$. This projection is divisible by $d_{k-1}$, and the fact that $d_{k-1}$ is an MCD of $\{c_{k-1}(s) : s \in S\}$ forces it to be equal to $d_{k-1}$. Thus the projection of $e$ on $M_{P,k-1}$ is $0$, and no carry occurs when the canonical decompositions of $d_k$ and $e$ are added.

  If $e=0$ then there is nothing to prove. Therefore assume that $e$ is nonzero and write
  \[
    e = \sum_{j=1}^n b_j a_{j,k}
  \]
  in canonical form, where $b_j \in \ldb 0,p_{j+2k}-1\rdb$ for every $j \in \ldb 1,n\rdb$ and $b_n \ge 1$. For $s \in S$, set $q_j(s):=c_j(s)-m_j$ for $j \in \ldb 1,m\rdb$, and set $q_j(s):=0$ for $j>m$. If $b_j>0$ and $b_j>q_j(s)$ for every $s \in S$ then normalizing the equalities $s-d_k=e+(s-d_k-e)$ would produce a carry of one copy of $a_{j,k-1}$ in the projection of every element of $S-d_k$ on $M_{P,k-1}$. This would make $a_{j,k-1}$ a common divisor of $\{c_{k-1}(s)-d_{k-1} : s \in S\}$, contradicting that $d_{k-1}$ is an MCD of $\{c_{k-1}(s) : s \in S\}$. Therefore, for each $j$ with $b_j>0$, there exists $s \in S$ such that $b_j \le q_j(s)$. It follows that $j \le m$ and that $b_j \le \max\{q_j(s) : s \in S\}$ for every $j$. Hence there are only finitely many possibilities for $e$, proving the claim.

  Now choose a maximal element $d$ among the common divisors of $S-d_k$. If a nonzero element of $M_{P,k}$ divided every element of $S-(d_k+d)$ then adding it to $d$ would yield a common divisor of $S-d_k$ strictly larger than $d$, contradicting the maximality of $d$. Thus, the only common divisor of $S-(d_k+d)$ is $0$, and so $d_k+d$ is an MCD of~$S$ in $M_{P,k}$.
\end{proof}

\smallskip
\subsubsection{A Class of Antimatter MCD Monoids}

We now turn to exhibiting another class of non-atomic MCD rank-$1$ monoids. For each $p \in \pp$, the set $V_p := \nn_0\big[\frac1p\big]$ of $p$-adic nonnegative rationals is a rank-$1$ valuation monoid. Now for each subset $P$ of $\pp$, we set
\begin{equation} \label{eq:the GCD monoid M_P}
  M_P := \bigoplus_{p \in P} V_p.
\end{equation}
The class of monoids we explore in this section is $\{M_P : P \subseteq \pp \}$. As with the class exhibited earlier, the elements of each monoid $M_P$ have a natural sum decomposition, which is illustrated in the following proposition.

\begin{proposition}~\cite[Proposition 5.14]{CGGP25} \label{prop:canonical decomposition}
  Let $P$ be a subset consisting of primes, and consider the monoid $M_P := \bigoplus_{p \in P} V_p$. Each element $q \in M_P$ can be written uniquely as follows
  \begin{equation} \label{eq:canonical representation for internal sum of p-valuation}
    q = c_0 + \sum_{(p,n) \in P \times \nn} c_{p,n} \frac1{p^n},
  \end{equation}
  where $c_0 \in \nn_0$ and, for each $p \in P$, the sequence $(c_{p,n})_{n \ge 1}$ consists of nonnegative integer coefficients, almost all of which are zero, such that $0 \le c_{p,n} < p$ for every $n \in \nn$.
\end{proposition}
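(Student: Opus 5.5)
The proof has two parts, existence and uniqueness, and both reduce to one coordinate at a time. Since $M_P = \bigoplus_{p \in P} V_p$, every $q \in M_P$ can be written uniquely as a finite sum $q = \sum_{p \in F} q_p$. Here $F \subseteq P$ is finite and each $q_p \in V_p = \nn_0[\frac1p]$, because the sum is direct (internal in $\qq$). So the plan is to build the expansion one prime at a time and then assemble it.

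\emph{Existence.} For a fixed $p$, write $q_p = \frac{a}{p^N}$ with $a \in \nn_0$. Let $a = a_0 p^N + a_1 p^{N-1} + \dots + a_N$ be the base-$p$ expansion of $a$, with $a_0 \in \nn_0$ possibly large and $a_i \in \ldb 0, p-1 \rdb$ for $i \ge 1$. Then
\[
  q_p = a_0 + \sum_{n=1}^{N} a_n \frac1{p^n}.
\]
We set $c_{p,n} := a_n$, set $c_{p,n} := 0$ for $n > N$, and collect all the integer parts into $c_0 := \sum_{p \in F} a_0^{(p)}$. This gives a representation of the required form, with almost all coefficients equal to zero.

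\emph{Uniqueness.} Suppose two such representations of $q$ are given. Subtract them to get
\[
  (c_0 - c_0') + \sum_{(p,n)} (c_{p,n} - c'_{p,n}) \frac1{p^n} = 0 .
\]
Assume the coefficients are not all equal. Fix a prime $p$ for which some difference $c_{p,n} - c'_{p,n}$ is nonzero, and take $n$ maximal with this property. Multiply the equality by $p^{n-1}\prod_{\ell} \ell^{N_\ell}$, where $\ell$ ranges over the other finitely many primes involved and $N_\ell$ is large. Every term then becomes an integer except
\[
  \frac{(c_{p,n}-c'_{p,n}) \prod_\ell \ell^{N_\ell}}{p}.
\]
This is not an integer, because $0 < |c_{p,n}-c'_{p,n}| < p$ and $p \nmid \prod_\ell \ell^{N_\ell}$. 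That contradicts the sum being $0$. Equivalently, apply the $p$-adic valuation and compare the most negative exponents. Hence all the $c_{p,n}$ agree, and then $c_0 = c_0'$.

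The only step that needs care is this valuation argument: we must make sure the terms coming from the other primes, and the lower powers of $p$, cannot cancel the top fractional term. The digit bound $0 \le c_{p,n} < p$ is exactly what guarantees this.
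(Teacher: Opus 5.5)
The paper gives no proof of this proposition; it imports the result from \cite{CGGP25}, so there is no in-paper argument to compare yours against. Your proof is correct and is the standard one. For existence, you take base-$p$ digits of each $q_p$ and pool the integer parts. For uniqueness, you clear denominators at the largest index $n$ where the two expansions differ for a fixed prime $p$, and the digit bound makes $|c_{p,n}-c'_{p,n}|<p$, so one term is not an integer. This is equivalent to a $p$-adic valuation argument, and it has the same flavor as the uniqueness step in Lemma~\ref{lem:CSD 1}.

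One statement should be corrected. Your opening claim that $q=\sum_{p\in F}q_p$ is \emph{unique} ``because the sum is direct (internal in $\qq$)'' is false. The sum of the $V_p$ inside $\qq$ is not direct, since $\nn_0\subseteq V_p\cap V_{p'}$; for instance, $1\in V_2\cap V_3$. The sum is direct only modulo $\zz$, and that is exactly why the statement collects all integer parts into a single $c_0$. Neither half of your argument uses that uniqueness. Existence needs only some decomposition, and your uniqueness proof works directly with the two expansions. So you can simply delete ``uniquely'' and the justification that follows it.
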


For each pair $(p,n) \in P \times \nn$, we let $c_0, c_{p,n} \colon M_P \to \nn_0$ be the functions defined as follows: $c_0(q) = c_0$ and $c_{p,n}(q) = c_{p,n}$, where $c_0$ and $c_{p,n}$ are the coefficients in the canonical sum decomposition of $q$ in~\eqref{eq:canonical representation for internal sum of p-valuation}. Now we introduce the following terminology.

\begin{definition}
  Let $P$ be a nonempty set consisting of primes, and let $M_P$ be the monoid in~\eqref{eq:the GCD monoid M_P}.
  \begin{itemize}
    \item For each $q \in M_P$, we call the right-hand side of~\eqref{eq:canonical representation for internal sum of p-valuation} the \emph{canonical sum decomposition} of $q$ in $M_P$.
      \smallskip

    \item For each pair $(p,n) \in P \times \nn_0$, we call the function $c_{p,n} \colon M_P \to \nn_0$ the $(p,n)$-\emph{canonical projection} of $M_P$.
  \end{itemize}
\end{definition}

It was also proved in~\cite[Proposition~5.6]{CGGP25} that the canonical $(p,n)$-projections just defined on the monoid $M_P$ satisfy the following basic properties.
\begin{enumerate}
  \item For each $p \in P$, the inequality $v_p(q) \ge 0$ holds if and only if $c_{p,n}(q) = 0$ for every $n \in \nn$.
    \smallskip

  \item For each $(p,n) \in P \times \nn$, the equality $c_{p,n}(q+r) = c_{p,n}(q) + c_{p,n}(r)$ holds for all $q,r \in M_P$.
    \smallskip

  \item $c_0(q+r) = c_0(q) + c_0(r)$ for all $q,r \in M_P$ such that $\gcd(\mathsf{d}(q), \mathsf{d}(r)) = 1$.
\end{enumerate}

Fix $q \in M_P$. It will be convenient in our proof that $M_P$ is an MCD monoid to compact the canonical sum decompositions of $q$ inside $M_P$ by setting, for each $p \in P$,
\[
  c_p(q) := \sum_{n \in \nn} c_{p,n}(q)\frac1{p^n} \in V_p,
\]
and then rewrite
the canonical sum decomposition of $q$ as follows:
\begin{equation}
    q = c_0(q) + \sum_{p \in P} c_p(q).
\end{equation}
As the following proposition indicates, canonical sum decompositions inside $M_P$ behave well with respect to divisibility.
\smallskip

\begin{proposition}~\cite[Proposition~5.17]{CGGP25} \label{prop:CSD and divisibility}
  Let $P$ be a nonempty set of primes, and let $M_P$ be the monoid defined in~\eqref{eq:the GCD monoid M_P}. For any $r,s \in M_P$, the following statements hold:
  \begin{enumerate}
    \item  $c_0(r)$ is the largest integer dividing $r$ in $M_P$.
      \smallskip

    \item If $r \mid_{M_P} s$ then $c_0(r) \le c_0(s)$.
      \smallskip

    \item If $r \mid_{M_P} s$ and $c_p(r) > c_p(s)$ then $c_0(r) < c_0(s)$ for every $p \in P$.
  \end{enumerate}
\end{proposition}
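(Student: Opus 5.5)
The plan is to make the canonical sum decomposition explicit in terms of fractional parts, and then compare the decompositions of $r$, $t$ and $s = r+t$ whenever $r \mid_{M_P} s$.

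\textbf{Step 1: $c_p$ depends only on the class modulo $\zz$.} For $q \in M_P$, the uniqueness in Proposition~\ref{prop:canonical decomposition} gives $0 \le c_p(q) < 1$, since $c_p(q) \le \sum_{n} (p-1)p^{-n} < 1$ for a finite sum. The element $q - c_p(q) = c_0(q) + \sum_{p' \ne p} c_{p'}(q)$ has nonnegative $p$-adic valuation. So $c_p(q)$ is the unique element of $V_p \cap [0,1)$ that is congruent to $q$ modulo the subgroup $\zz_{(p)}$ of rationals with nonnegative $p$-adic valuation. In particular, if $q' - q \in \zz$ then $c_p(q') = c_p(q)$ for every $p \in P$. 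This also follows from property~(1) listed before the statement, applied to the difference.

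Similarly, for $r, t \in M_P$ the number $c_p(r) + c_p(t) \in [0,2)$ is congruent to $c_p(r+t)$ modulo $\zz_{(p)}$. Hence
\[
  c_p(r+t) = c_p(r) + c_p(t) - \varepsilon_p, \qquad \varepsilon_p \in \{0,1\}.
\]
Only finitely many of the $\varepsilon_p$ are nonzero. Writing $q = c_0(q) + \sum_p c_p(q)$ for $q = r$, $q = t$ and $q = r+t$ and comparing, we get the carry formula
\[
  c_0(r+t) = c_0(r) + c_0(t) + \sum_{p \in P} \varepsilon_p.
\]

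\textbf{Step 2: the three statements.}

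For~(1), note first that $r - c_0(r) = \sum_p c_p(r) \in M_P$, so the integer $c_0(r)$ divides $r$. Now suppose an integer $N \in \nn_0$ divides $r$, say $r = N + t$ with $t \in M_P$. By Step~1, $c_p(t) = c_p(r)$ for all $p$, so $c_0(t) = c_0(r) - N$. Since $c_0(t) \ge 0$, this gives $N \le c_0(r)$.

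For~(2), write $s = r + t$ with $t \in M_P$. The carry formula gives $c_0(s) \ge c_0(r) + c_0(t) \ge c_0(r)$.

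For~(3), suppose moreover that $c_p(r) > c_p(s)$ for some $p$. Then $c_p(s) < c_p(r) \le c_p(r) + c_p(t)$, so $\varepsilon_p = 1$. The carry formula then yields $c_0(s) \ge c_0(r) + 1$.

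\textbf{Main obstacle.} The only delicate point is Step~1. One has to verify carefully that the $p$-components add "with carry at most one." This reduces to the fact that the $p$-part of an element of $M_P$ modulo $\zz_{(p)}$ has a unique representative in $V_p \cap [0,1)$, which is exactly the uniqueness part of Proposition~\ref{prop:canonical decomposition}. Once the carry formula is in place, all three statements are immediate.
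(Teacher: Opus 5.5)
Your proof is correct and complete. The paper gives no argument of its own for this proposition; it is quoted from \cite[Proposition~5.17]{CGGP25}. So there is no in-text proof to compare against, and your write-up makes the statement self-contained.

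Your key device is to identify $c_p(q)$ as the unique element of $V_p \cap [0,1)$ congruent to $q$ modulo $\zz_{(p)}$. From this you get $c_p(r+t) = c_p(r) + c_p(t) - \varepsilon_p$ with $\varepsilon_p \in \{0,1\}$, and hence $c_0(r+t) = c_0(r) + c_0(t) + \sum_{p} \varepsilon_p$. This is the right tool, and all three parts follow from it as you say.

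Your carry identity is also the correct replacement for property~(2) listed just before the statement. As written there, $c_{p,n}(q+r) = c_{p,n}(q) + c_{p,n}(r)$ fails in general: for instance, $c_{2,1}(\tfrac12 + \tfrac12) = 0 \neq 2$. It is good that you did not rely on it.

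Two small remarks. First, the uniqueness you need in Step~1 is just the elementary fact $\zz[1/p] \cap \zz_{(p)} = \zz$ together with $|x - y| < 1$ for $x, y \in [0,1)$. It is not really the uniqueness part of Proposition~\ref{prop:canonical decomposition}; if anything, it implies that uniqueness. Second, your parenthetical alternative via property~(1) ``applied to the difference'' is not a complete argument on its own. Property~(1) only tells you that the nonnegative integer $|q'-q|$ has vanishing $p$-digits, and you would still need an additivity statement to transfer this to $q$ and $q'$. Your main justification does not depend on it, so nothing is lost.
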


We are in a position to argue that $M_P$ is an MCD monoid for every nonempty set $P$ consisting of primes.

\begin{theorem} \label{thm:M_P is an MCD monoid}
  For any nonempty set $P$ consisting of primes, the monoid $M_P$ defined in~\eqref{eq:the GCD monoid M_P} is an MCD monoid.
\end{theorem}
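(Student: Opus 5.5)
Given a nonempty finite subset $S = \{s_1, \dots, s_t\}$ of $M_P$, we may assume that every $s_i$ is nonzero; otherwise $0$ is an MCD of $S$. The plan is to show that the set of common divisors of $S$ in $M_P$ is finite. Since $M_P$ is reduced, divisibility is then a partial order on this finite nonempty set (it contains $0$), so it has a maximal element $d$. Any nonzero common divisor $e$ of $S - d$ would make $d + e$ a common divisor of $S$ strictly above $d$. Hence $d \in \mcd_{M_P}(S)$, exactly as at the end of the proof of Proposition~\ref{prop:MPk-is-MCD}.

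\textbf{Bounding a common divisor.} Let $r$ be a common divisor of $S$. By Proposition~\ref{prop:CSD and divisibility}(2), $c_0(r) \le \min_i c_0(s_i) =: N$, so $c_0(r)$ takes finitely many values.

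Next we control the support of $r$. The element $r$ divides $s_1$, and $r \le s_1$ in $\qq$. Every prime dividing the denominator $\mathsf{d}(r)$ must lie in $P$. We claim it must also divide $\mathsf{d}(s_1)$. Indeed, $s_1 - r \in M_P$, and $M_P$ is a direct sum of the $V_p$. Writing $r = \sum_p r_p$ and $s_1 - r = \sum_p u_p$ with $r_p, u_p \in V_p$, we get $s_1 = \sum_p (r_p + u_p)$. So for each prime $p$, the components satisfy $r_p \le (s_1)_p$ in some representation. I would rather argue via valuations: if $v_p(r) < 0$, then $c_{p,n}(r) \ne 0$ for some $n$ by property~(1). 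Property~(2), that $c_{p,n}$ is additive, then gives $c_{p,n}(s_1) = c_{p,n}(r) + c_{p,n}(s_1 - r) \ne 0$, so $v_p(s_1) < 0$. Thus only primes in the finite set $Q := \{p : p \mid \mathsf{d}(s_1)\}$ can appear in $\mathsf{d}(r)$. Additivity likewise gives $c_{p,n}(r) \le c_{p,n}(s_1)$ for all $(p,n)$. Hence $c_{p,n}(r) = 0$ whenever $c_{p,n}(s_1) = 0$, and there are finitely many choices for each $c_{p,n}(r)$ with $(p,n)$ ranging over a finite set. Together with the bound on $c_0(r)$ and the uniqueness in Proposition~\ref{prop:canonical decomposition}, only finitely many $r$ are possible.

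An even simpler fallback is available, since $r$ lives in the finite-denominator world. Each divisor $r$ of $s_1$ satisfies $0 \le r \le s_1$, and $\mathsf{d}(r)$ divides a bounded quantity, since $c_{p,n}(r) \le c_{p,n}(s_1)$ forces $v_p(r) \ge v_p(s_1)$ for each $p \in Q$. So $r \in \frac{1}{\mathsf{d}(s_1)}\zz \cap [0, s_1]$, a finite set. This makes even the bound on $c_0$ unnecessary.

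\textbf{Main obstacle.} The main point to check is that the additivity property~(2) is really valid for all $q, r \in M_P$, with no carry issue. Adding $p$-adic digits can overflow past $p - 1$, and such a carry changes the digits, so an unconditional identity $c_{p,n}(q+r) = c_{p,n}(q) + c_{p,n}(r)$ is suspicious. If property~(2) fails because of carries, I would use the direct valuation argument instead.

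Suppose $s = r + u$ with $r, u \in M_P$. Write $r = \sum_p r_p$ and $u = \sum_p u_p$ with $r_p, u_p \in V_p \subseteq \qq_{\ge 0}$. Each $V_p$ has only $p$-power denominators, so $v_p(r) = v_p(r_p)$ whenever $v_p(r) < 0$, and the same holds for $u$. If $v_p(r) < 0$ and $v_p(s) \ge 0$, then we need $v_p(r_p + u_p) \ge 0$. This forces $v_p(u_p) = v_p(r_p)$, which is possible, so I cannot rule it out this way.

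The cleaner route is magnitude plus denominators. From $p$-adic cancellation, $v_p(r_p) \ge \min(v_p(s), \ldots)$ is not directly bounded. Instead, I would show $\mathsf{d}(r)$ is bounded using $r \le s$. Every element of $V_p$ whose $p$-adic valuation is $-n$ is at least $p^{-n}$, but this gives no bound.

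So the real obstacle is bounding denominators. If finiteness of divisors fails, I would instead mimic Proposition~\ref{prop:MPk-is-MCD}. Take $d_0$ to be the largest integer that is a common divisor, which is at most $\min_i c_0(s_i)$ by Proposition~\ref{prop:CSD and divisibility}(1). Then choose a common divisor $d$ of $S$ maximizing $c_0(d)$, and within that level maximize with respect to the finitely many bounded digits. Proposition~\ref{prop:CSD and divisibility}(3) shows that passing to a larger divisor without raising $c_0$ cannot increase any $c_p$ beyond $c_p(s_i)$. This yields the required bound, and the existence of an MCD follows.
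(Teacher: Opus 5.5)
Your main plan fails because the set of common divisors of $S$ is, in general, infinite. For $p\in P$ and $n\in\nn$ we have $1-1/p^n=\sum_{i=1}^{n}(p-1)/p^i\in V_p\subseteq M_P$, so $1/p^n \mid_{M_P} 1$ for every $n$. Thus already $S=\{1\}$ has infinitely many common divisors, with unbounded denominators. Both your bounding step and your ``simpler fallback'' rest on $c_{p,n}(r)\le c_{p,n}(s_1)$, which you derived from digit-additivity. As you suspected, that identity is false because of carries: $c_{2,1}(\tfrac12)+c_{2,1}(\tfrac12)=2\neq 0=c_{2,1}(1)$. The paper's proof does not use it. What does hold is that carries can only increase $c_0$, which is part~(2) of Proposition~\ref{prop:CSD and divisibility}.

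Your last fallback has the right shape and is in fact the paper's strategy: set $C:=\min_{s\in S}c_0(s)$ and look for a divisibility-maximal element of the set $\mathcal{D}$ of common divisors $d$ with $c_0(d)=C$. But ``maximize with respect to the finitely many bounded digits'' is exactly the unproved step.

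\begin{itemize}
\item Part~(3) only gives $c_p(d)\le c_p(s)$ for those $s$ with $c_0(s)=C$. This bounds the value of $c_p(d)$, not the depth of its $p$-adic digits.
\item So $\mathcal{D}$ is usually infinite: for $S=\{1/2\}$ and $P=\{2\}$ it is $V_2\cap[0,1/2]$.
\item Bounded subsets of $V_2$ such as $\{1/2-1/2^n : n\ge 2\}$ are strictly ascending divisibility chains with no maximal element. So boundedness alone yields no maximal element.
\end{itemize}

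The missing idea is a domination argument. Let $N_p$ be the deepest level at which some $s\in S$ has a nonzero $p$-digit, with $N_p=0$ if there is none. Suppose $d\in\mathcal{D}$ has a nonzero $p$-digit at some level $n>N_p$; take $n$ maximal and write $s=d+u_s$.

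\begin{itemize}
\item Since $v_p(s)\ge -N_p>-n=v_p(d)$, we get $v_p(u_s)=-n$. So the deepest nonzero $p$-digit of $u_s$ sits at level $n$, hence $c_{p,n}(u_s)\neq 0$ and $u_s-1/p^n\in M_P$ for every $s\in S$.
\item Therefore $d+1/p^n$ is again a common divisor. It stays in $\mathcal{D}$ because $c_0$ cannot drop under addition, and cannot exceed $C$ for a common divisor.
\item Iterate. After at most $p-1$ steps a carry lowers the deepest offending level, so $d$ is eventually dominated by an element of the finite set $\mathcal{D}_0$. This is the set of elements of $\mathcal{D}$ having no nonzero $p$-digit beyond level $N_p$, for every $p$.
\end{itemize}

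Hence $\mathcal{D}$ has a maximal element $m$. You still need the short check that $m$ is an MCD. If $e\neq 0$ divides every element of $S-m$, then $C=c_0(m)\le c_0(m+e)\le C$. So $m+e\in\mathcal{D}$ strictly dominates $m$, contradicting maximality.
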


\begin{proof}
  Fix a nonempty finite subset $S$ of $M_P$ and let us prove that $S$ has an MCD inside $M_P$. If $0 \in S$ then $0$ is the only common divisor of $S$ in $M_P$, and so $0$ is an MCD of $S$. Thus, from now on we can assume that $S \subseteq M_P^\bullet$.
  For each $s \in S$, write its compact canonical sum decomposition:
  \[
    s = c_0(s) + \sum_{p \in P} c_p(s),
  \]
  where $c_p(s) = \sum_{n \in \nn} c_{p,n}(s)/p^n$ for each $p \in P$. Now set
  \[
    C := \min\{c_0(s) : s \in S\} \quad \text{ and } \quad S_0 := \{s \in S : c_0(s) = C \}.
  \]
  Observe that $C$ is a common divisor of $S$, and part~(2) of Proposition~\ref{prop:CSD and divisibility} ensures that no common divisor of $S$ has integer part larger than $C$, that is, $c_0(d) \le C$ for every common divisor of $S$ in $M_P$. Now consider the set
  \[
    \mathcal{D} := \{d \in M_P : d \text{ is a common divisor of } S \text{ and } c_0(d)=C\}.
  \]
  This set is nonempty because $C \in \mathcal{D}$. Although $\mathcal{D}$ need not be finite, we now show that every element of $\mathcal{D}$ is dominated by an element in a finite subset of $\mathcal{D}$. Let
  \[
    F := \{p \in P : c_{p,n}(s) \ne 0 \text{ for some } s \in S \text{ and } n \in \nn\}.
  \]
  Observe that the set $F$ is finite. For each $p \in F$, set
  \[
    N_p := \max\{n \in \nn : c_{p,n}(s) \ne 0 \text{ for some } s \in S\}.
  \]
  For $p \in P \setminus F$, set $N_p:=0$ and consider the set
  \[
    \mathcal{D}_0 := \{d \in \mathcal{D} : c_{p,n}(d)=0 \text{ for all } p \in P \text{ and all } n>N_p\}.
  \]
  The set $\mathcal{D}_0$ is finite: if $d \in \mathcal{D}_0$ and $c_{p,n}(d) \ne 0$ then $p \in F$ and $1 \le n \le N_p$, while the canonical decomposition bounds the corresponding coefficient by
  \[
    1 \le c_{p,n}(d) \le p-1.
  \]
  Equivalently, every possible denominator of an element of $\mathcal{D}_0$ divides the integer $\prod_{p \in F} p^{N_p}$.

  Let us verify that every element of $\mathcal{D}$ is dominated by an element of $\mathcal{D}_0$. Fix $d \in \mathcal{D}$. If $c_{p,n}(d)>0$ for some $p \in P$ and some $n>N_p$, choose such an index $n$ maximal for the fixed prime $p$. For every $s \in S$, write $s=d+u_s$ with $u_s \in M_P$. Comparing the $p$-primary canonical expansions above the level $N_p$, and using the maximality of $n$, shows that the coefficient of $1/p^n$ in $u_s$ is nonzero; hence $u_s-1/p^n \in M_P$. Thus $d+1/p^n$ is still a common divisor of $S$ and still has integer part $C$, whence $d+1/p^n \in \mathcal{D}$ and $d \prec d+1/p^n$. Repeating this finite enlargement process eliminates the largest offending denominator for each prime after finitely many steps and creates no larger denominator. Therefore the process terminates at an element of $\mathcal{D}_0$ that dominates~$d$.

  Now consider the set $\mathcal{D}$ as a poset under the divisibility induced by $M_P$: for any $d_1, d_2 \in M_P$, we say that $d_1 \preceq d_2$ provided that $d_1 \mid_{M_P} d_2$. Since $\mathcal{D}_0$ is finite and every element of $\mathcal{D}$ is dominated by an element of $\mathcal{D}_0$, the poset $(\mathcal{D}, \preceq)$ has a maximal element. Note that, for every maximal element $m$ of $\mathcal{D}$ and every $s \in S_0$, it follows that $c_0(m) = C = c_0(s)$ and so $c_0(s-m)=0$.

  We proceed to prove that every maximal element in the poset $(\mathcal{D}, \preceq)$ is an MCD of $S$ in $M_P$. Suppose, toward a contradiction, that we can take an element $m \in \mathcal{D}$ that is maximal with respect to $\preceq$ but is not an MCD of $S$. Then there exists a nonzero element $e \in M_P$ such that $m+e$ is a common divisor of~$S$. Because~$e$ is a common divisor of $S_0-m$, for every $s \in S_0$, the relation $e \mid_{M_P} s-m$ holds, and so from the fact that $c_0(s-m) = 0$ we obtain that $c_0(e) = 0$ (in light of part~(2) of Proposition~\ref{prop:CSD and divisibility}). Since $m \mid_{M_P} m+e$, another use of part~(2) of Proposition~\ref{prop:CSD and divisibility} gives
  \[
    C = c_0(m) \le c_0(m+e).
  \]
  On the other hand, as $m+e$ is a common divisor of $S$, the maximality of $C$ as an integer part of a common divisor of $S$ guarantees that $c_0(m+e) \le C$. Hence $c_0(m+e) = C$. Therefore $m+e$ is a common divisor of $S$ such that $c_0(m+e) = C$, whence $m+e \in \mathcal{D}$. However, this contradicts the maximality of~$m$ in $\mathcal{D}$ because the fact that $e\neq 0$ implies that $m \prec m+e$ (because $M_P$ is a reduced monoid). Thus, the only common divisor of $S-m$ must be $0$, and so $m$ is an MCD of $S$ in $M_P$.
\end{proof}

\smallskip
\subsection{On the Ascent of the MCD Property to Monoid Algebras}

Our next goal is to argue that the MCD property ascends to monoid algebras over the class of pre-Schreier monoids. We use the following lemma as a tool.

\begin{lemma}\cite[Proposition~6.2]{GP74} \label{lem:exponent sum condition}
Let $R$ be an integral domain, and let $M$ be a cancellative torsion-free monoid. For nonzero $f(x), g(x) \in R[M]$,
\[
  |\emph{supp} \, g(x)|\,\emph{supp} \, f(x)+\emph{supp} \, g(x) = (|\emph{supp} \, g(x)|-1)\,\emph{supp} \, f(x)+\emph{supp} \, f(x)g(x),
\]
where $nA$ denotes the $n$-fold sumset of $A$.
\end{lemma}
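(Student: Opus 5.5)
The inclusion $(|\text{supp}\, g|-1)\,\text{supp}\, f + \text{supp}\, fg \subseteq |\text{supp}\, g|\,\text{supp}\, f + \text{supp}\, g$ is immediate, because $\text{supp}\, fg \subseteq \text{supp}\, f + \text{supp}\, g$. So the work is the reverse inclusion. Write $F = \text{supp}\, f$, $G = \text{supp}\, g = \{b_1, \dots, b_n\}$ and $g = \sum_{i} g_i x^{b_i}$. Fix a total order $\preceq$ on $M$ compatible with addition, which exists by Levi's theorem as recalled in the background. I plan to prove, by induction on $n = |G|$ and for all $f$ simultaneously, that
\[
  nF + G \subseteq (n-1)F + \text{supp}\, fg .
\]
The base case $n = 1$ is trivial: if $g = c x^{b}$, then $\text{supp}\, fg = F + b$ because $R$ is a domain.

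For the inductive step, fix $y = a_1 + \dots + a_n + b_j \in nF + G$. Choose any index $i \neq j$ and set $g' := g - g_i x^{b_i}$, which has support $G \setminus \{b_i\}$ of size $n-1$. Since $fg' = fg - g_i x^{b_i} f$, we get
\[
  \text{supp}\, fg' \subseteq \text{supp}\, fg \cup (F + b_i).
\]
The element $y - a_n$ lies in $(n-1)F + (G \setminus \{b_i\})$. By the induction hypothesis it therefore lies in $(n-2)F + \text{supp}\, fg'$. Adding back $a_n$, we obtain
\[
  y \in (n-1)F + \text{supp}\, fg \quad \text{or} \quad y \in nF + b_i .
\]
Because $i \neq j$ was arbitrary, every bad element $y$ (one outside $(n-1)F + \text{supp}\, fg$) satisfies $y - b_i \in nF$ for every $i \in \ldb 1,n \rdb$.

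The main obstacle is to rule out such bad elements. My first attempt is a linear-algebra argument. Let
\[
  X := \{x : y - x \in (n-1)F\}.
\]
For each $x \in X \cap (F+G)$, badness of $y$ gives the relation
\[
  \sum_{i=1}^n g_i\, f_{x - b_i} = 0,
\]
where $f_m$ denotes the coefficient of $x^m$ in $f$. These are homogeneous linear equations in the nonzero vector $(g_1, \dots, g_n)$. The condition $y - b_i \in nF$ for all $i$ supplies many members of $X$: from $y = A_i + b_i$ with $A_i \in nF$, each term of $A_i$ can be split off to give elements of the form $b_i + a$ lying in $X$.

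I would then use $\preceq$ to pick $n$ of these equations whose coefficient matrix is triangular with nonzero diagonal entries. Concretely, for each $b_i$, choose the $\preceq$-largest element $a \in F$ with $b_i + a \in X$; the diagonal coefficient is then $f_a \neq 0$, and all other coefficients in that equation should vanish by maximality. A nonsingular triangular system forces $g = 0$, which is a contradiction.

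If controlling the off-diagonal terms this way proves delicate, I would fall back on a nested induction on $|F|$, peeling off the $\preceq$-maximal monomial of $f$ in the same way as for $g$. In that setting the top-degree term $\deg f + \deg g \in \text{supp}\, fg$ serves as the anchoring case.
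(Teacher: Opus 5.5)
\textbf{Verdict: there is a genuine gap.} Your easy inclusion, base case and peeling step are correct, as are the relations $\sum_i g_i f_{z-b_i}=0$ for $z\in X$. The gap is the step that is supposed to rule out a bad $y$, and that step is the whole content of the lemma. (The paper gives no proof of its own; it only cites Gilmer--Parker.)

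\textbf{Why the triangular system fails.} Let $a^{(i)}$ be the largest $a\in F$ with $b_i+a\in X$, and put $z_i:=b_i+a^{(i)}$. Maximality of $a^{(i)}$ does not make the off-diagonal coefficients vanish. It only gives $f_{z_i-b_k}\neq 0\Rightarrow z_i\preceq z_k$. That would yield a nonsingular triangular system if the $z_i$ were pairwise distinct, but collisions $b_i+a^{(i)}=b_k+a^{(k)}$ do occur. Take $M=\nn_0$, $F=G=\{0,1\}$ (so $b_1=0$, $b_2=1$, $n=2$) and $y=1$.
Then $y-b_i\in 2F$ for both $i$, and $X=\{0,1\}$. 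Both indices give $z_1=z_2=1$, so your rule selects only the single equation $f_1g_1+f_0g_2=0$. Its off-diagonal coefficient $f_0$ is nonzero, and it has nonzero solutions. The equation that actually yields the contradiction is the one at $z=0$, namely $f_0g_1=0$, and your rule never selects it. Choosing the smallest admissible $a$ instead fails symmetrically at $y=2$: there $X=\{1,2\}$ and both indices give $z=1$.

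\textbf{How to close the argument.} Your fallback is the right idea, but as written it does not close. The reason is that your peeling step throws away which summand was split off. What it actually gives is $y\in(n-1)F+\text{supp}\,fg$ or $y\in a_n+(n-1)F+b_i$. Induct on $|F|+|G|$; the cases $|F|=1$ or $|G|=1$ are trivial. Put $\alpha:=\deg f$ and $\beta:=\deg g$, and use the anchor $\alpha+\beta\in\text{supp}\,fg$.

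First, peel $f_\alpha x^\alpha$ off $f$. The induction hypothesis for $(f-f_\alpha x^\alpha,g)$, together with $\text{supp}\,(f-f_\alpha x^\alpha)g\subseteq\text{supp}\,fg\cup(\alpha+G)$, shows the following: every $y\in nF+G$ either lies in $(n-1)F+\text{supp}\,fg$ or has a representation $y=\alpha+A+b$ with $A\in(n-1)F$ and $b\in G$.

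If $b=\beta$, the anchor finishes. If $b\neq\beta$, run your $g$-peeling with the specific choices $b_i:=\beta$ and $a_n:=\alpha$. Apply the induction hypothesis for $(f,g-g_\beta x^\beta)$ to $y-\alpha=A+b\in(n-1)F+(G\setminus\{\beta\})$, and use $\text{supp}\,f(g-g_\beta x^\beta)\subseteq\text{supp}\,fg\cup(F+\beta)$. This puts $y$ either in $(n-1)F+\text{supp}\,fg$ or in $\alpha+\beta+(n-1)F\subseteq(n-1)F+\text{supp}\,fg$.

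That completes the proof with no linear algebra. The missing idea is exactly the pair of choices $b_i=\deg g$ and $a_n=\deg f$, which is what lets the anchor apply.
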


The following proposition is well known, and it can be proved by mimicking the proof given in~\cite[Theorem~1.1]{mZ87} in the setting of integral domains.

\begin{proposition} \label{prop:Schreier interpolation}
  Let $M$ be a pre-Schreier monoid. Then for any $d_1, \dots, d_k, s_1, \dots, s_\ell \in M$ with $d_i \mid_M s_j$ for every $(i,j) \in \ldb 1,k \rdb \times \ldb 1, \ell \rdb$, there exists $d \in M$ such that $d_i \mid_M d$ for every $i \in \ldb 1,k \rdb$ and $d \mid_M s_j$ for every $j \in \ldb 1, \ell \rdb$.
\end{proposition}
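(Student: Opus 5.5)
We prove the interpolation statement of Proposition~\ref{prop:Schreier interpolation} by a double induction, mirroring Zafrullah's argument. The basic tool is the primal property of each element. It will be convenient to first record the standard consequence of primality for divisors of a product: if $a$ is primal and $a \mid_M bc$, then $a = b'c'$ with $b' \mid_M b$ and $c' \mid_M c$.

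\textbf{Case $k=2$, $\ell=1$.} Here the claim is trivial: take $d = s_1$.

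\textbf{Case $k=1$.} This is also trivial: take $d = d_1$.

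\textbf{Case $k=2$, $\ell=2$.} This is the heart of the matter. Suppose $d_1, d_2 \mid_M s_1, s_2$. Write $s_1 = d_1 t$. Since $d_2 \mid_M d_1 t$ and $d_2$ is primal, we can write $d_2 = u v$ with $u \mid_M d_1$ and $v \mid_M t$. Then $d_1 v \mid_M d_1 t = s_1$, and $d_1 v$ is a common multiple of $d_1$ and of $d_2 = uv$, since $u \mid_M d_1$. The trouble is that $d_1v$ need not divide $s_2$. Rather than forcing this, we follow Zafrullah's route through the \emph{general} $\ell$ at once, using the following step.

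\emph{Lemma-type step.} If $d_1,d_2 \mid_M s_j$ for all $j \in \ldb 1,\ell\rdb$, we induct on $\ell$, and the case $\ell=1$ is trivial. Assume we have $e$ with $d_1,d_2 \mid_M e \mid_M s_j$ for $j<\ell$. Then $d_1, d_2$ divide both $e$ and $s_\ell$, so the problem reduces to $\ell=2$ with the pair $(e, s_\ell)$. Thus the essential case is two multiples $a,b$ of $d_1,d_2$. For this case we use primality of $d_1$ and $d_2$ in the following way. Write $a = d_1 a'$. Then $d_2 \mid_M d_1 a'$ gives $d_2 = uv$ with $u \mid_M d_1$ and $v \mid_M a'$. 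Set $d_1 = u w$. Then $d_1 v = uwv = d_2 w$, which is a common multiple of $d_1$ and $d_2$ dividing $a$. Next we must arrange that it also divides $b$. We have $d_2 w \mid_M b$ iff $w \mid_M b/d_2$, and similarly $d_1 v \mid_M b$ iff $v \mid_M b/d_1$. The obstacle is that the decomposition $d_2 = uv$ was chosen only with respect to $a$.

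\textbf{Fixing the main obstacle.} To overcome this I would iterate the primal decomposition. Since $v \mid_M a/d_1$ and we need $v \mid_M b/d_1$, we replace $v$ by a primal factorization of $v$ relative to $b/d_1$. Concretely, $d_2 \mid_M b = d_1 (b/d_1)$, so $d_2 = u'v'$ with $u' \mid_M d_1$ and $v' \mid_M b/d_1$. Now we apply primality to $v$ (as a divisor of $d_2 = u'v'$) to split $v = v_1v_2$ with $v_1 \mid_M u'$ and $v_2 \mid_M v'$. The expected choice is $d := d_1 v_2$, possibly adjusted by $v_1$, which lies in $\gcd$-like position since $v_1 \mid_M u' \mid_M d_1$. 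One then checks the following:
\begin{itemize}
  \item $d_1 \mid_M d$ (clear);
  \item $d_2 \mid_M d$, using $d_2 = uv_1v_2$ with $uv_1 \mid_M d_1$;
  \item $d \mid_M a$ and $d \mid_M b$, since $v_2 \mid_M a/d_1$ and $v_2 \mid_M b/d_1$.
\end{itemize}
The delicate point, which I expect to be the main obstacle, is verifying that $uv_1 \mid_M d_1$. Here I would apply primality of $u$ or $v_1$ once more inside $d_1 = uw$, and cancellativity of $M$ is essential in this verification.

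\textbf{General $k$.} Finally, we handle general $k$ by induction on $k$. Given $d_1,\dots,d_k$ dividing all $s_j$, the inductive hypothesis yields $e$ with $d_i \mid_M e \mid_M s_j$ for $i<k$. Then $e$ and $d_k$ are two common divisors of all the $s_j$, and the case $k=2$ yields $d$ with $e, d_k \mid_M d \mid_M s_j$. Since divisibility is transitive, $d$ works.
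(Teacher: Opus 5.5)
Your two reductions are sound: for $k=2$, induction on $\ell$ reduces everything to two common multiples $a,b$, and induction on $k$ then handles arbitrary $k$. But the $2\times 2$ case, which you rightly call the heart of the matter, is never proved. Your candidate $d=d_1v_2$ needs $uv_1 \mid_M d_1$. This does not follow from $u \mid_M d_1$ and $v_1 \mid_M u' \mid_M d_1$, and it is false for admissible choices of the primal splittings. For instance, take the free monoid $\{z^n : n \in \nn_0\}$, which is a UFM and hence pre-Schreier, with $d_1=z$ and $d_2=a=b=z^2$. Then $a'=z$, and the choices $u=v=z$, $u'=v'=z$, $v_1=z$, $v_2=1$ satisfy every divisibility condition you impose. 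Yet $d=d_1v_2=z$ is not a multiple of $d_2$. Nothing in the proposal says how to choose the splittings so that $uv_1 \mid_M d_1$ holds, and ``apply primality once more'' does not supply this. Arranging compatible splittings in a general pre-Schreier monoid is essentially the interpolation problem you are trying to solve.

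The missing idea is to apply primality once, to a well-chosen element. For $i,j\in\{1,2\}$ write $s_j=d_ix_{ij}$. From $d_1x_{11}\,d_2x_{22}=s_1s_2=d_2x_{21}\,d_1x_{12}$ and cancellativity we get $x_{11}x_{22}=x_{21}x_{12}$, so $x_{21}\mid_M x_{11}x_{22}$. Primality of $x_{21}$ then gives $x_{21}=w_1w_2$ with $w_1\mid_M x_{11}$ and $w_2\mid_M x_{22}$. Set $d:=d_2w_2$ and check the four conditions. First, $d_2\mid_M d$ and $d\mid_M d_2x_{22}=s_2$. Next, $s_1=d_2x_{21}=dw_1$, so $d\mid_M s_1$. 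Finally, writing $x_{11}=w_1y$ gives $dw_1=s_1=d_1w_1y$, and cancellation yields $d=d_1y$, so $d_1\mid_M d$. This is the standard ``Riesz decomposition implies interpolation'' step; the paper does not write out an argument and only refers to Zafrullah's Theorem~1.1. With this base case in place, your two inductions complete the proof.
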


The following lemma is the last tool we need to prove the main theorem of this section.

\begin{lemma} \label{lem:primal singletons}
  Let $M$ be a pre-Schreier monoid, and let $B$ and $C$ be finite nonempty subsets of~$M$. For any common divisor $a \in M$ of $B+C$, we can write $a = b+c$ for some elements $b,c \in M$ such that $b$ is a common divisor of $B$ and $c$ is a common divisor of~$C$.
\end{lemma}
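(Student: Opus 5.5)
We will prove the statement by a double induction on $|B|$ and $|C|$, using additive notation for $M$ (so $a \mid_M b+c$ means $b+c-a \in M$). The base case $|B| = |C| = 1$ is exactly the definition of primality: if $B = \{b_0\}$ and $C = \{c_0\}$, then $a \mid_M b_0 + c_0$ and, since $a$ is primal, we can write $a = b + c$ with $b \mid_M b_0$ and $c \mid_M c_0$.

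For the inductive step, the symmetric roles of $B$ and $C$ allow us to assume that $C$ is a singleton $\{c_0\}$ and to induct on $|B|$ first; we reduce to this situation at the end. So let $B = B' \cup \{b_0\}$ with $B'$ nonempty, and let $a$ be a common divisor of $B + c_0$. By the induction hypothesis applied to $B'$, we can write $a = b_1 + c_1$, where $b_1$ is a common divisor of $B'$ and $c_1 \mid_M c_0$. Applying primality of $a$ to $a \mid_M b_0 + c_0$, we also write $a = b_2 + c_2$ with $b_2 \mid_M b_0$ and $c_2 \mid_M c_0$. The two decompositions must now be merged, and this is where Proposition~\ref{prop:Schreier interpolation} comes in. The element $a$ is a common divisor of $\{b + c_0 : b \in B\}$, and we look for a single $c \in M$ such that $c \mid_M c_0$ and $a - c$ divides every element of $B$. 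The idea is to choose $c$ to be "large": both $c_1$ and $c_2$ divide $c_0$, and we want $c$ to interpolate between $\{c_1,c_2\}$ and $\{c_0, a\}$. The divisibilities $c_1 \mid_M a$, $c_2 \mid_M a$, $c_1 \mid_M c_0$, and $c_2 \mid_M c_0$ all hold, so Proposition~\ref{prop:Schreier interpolation} yields $c$ with $c_1, c_2 \mid_M c$ and $c \mid_M a$, $c \mid_M c_0$. Setting $b := a - c \in M$, we get $b \mid_M a - c_1 = b_1$ and $b \mid_M a - c_2 = b_2$, so $b$ divides every element of $B'$ and also $b_0$. Hence $b$ is a common divisor of $B$ and $c \mid_M c_0$, as required.

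To handle general $C$, induct on $|C|$ with $B$ arbitrary, by exactly the same interpolation argument with the roles of $B$ and $C$ swapped. Let $C = C' \cup \{c_0\}$. Since $a$ divides $B + C'$ and $B + c_0$, we obtain decompositions $a = b_1 + c_1$, where $b_1$ divides $B$ and $c_1$ divides $C'$, and $a = b_2 + c_2$, where $b_2$ divides $B$ and $c_2 \mid_M c_0$. Interpolating $b_1, b_2 \mid_M b$ with $b \mid_M a$ and $b \mid_M$ every element of $B$, we set $c := a - b$, and then $c$ divides both $c_1$ and $c_2$.

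The main obstacle is checking that the interpolation hypotheses hold. In the first step this means verifying $c_i \mid_M a$, which is immediate since $a = b_i + c_i$. In the second step the interpolation takes place between $\{b_1,b_2\}$ and $B \cup \{a\}$, and every $b_i$ divides every element of $B$ and $a$, so the hypotheses are satisfied. We should also make sure that cancellativity is all that is needed to pass divisibility through subtraction, as in $b_1 - b = c - c_1 \in M$.
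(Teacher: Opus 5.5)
Your proof is correct and follows essentially the paper's argument. Both use primality for singleton pairs, Proposition~\ref{prop:Schreier interpolation} to merge the resulting decompositions, and cancellativity to pass divisibility to the complementary summand, first for singleton $C$ and then for general $C$. The only difference is that you merge the decompositions two at a time by induction, whereas the paper interpolates over the whole family $\{c'_i\}$ (respectively $\{b'_j\}$) in a single application.
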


\begin{proof}
  Set $m:=|B|$, and let $b_1, \dots, b_m$ be the elements of~$B$. Assume first that $|C|=1$, and write $C=\{c\}$. Since $M$ is a pre-Schreier monoid, for each $i \in \ldb 1,m \rdb$, the divisibility relation $a \mid_M b_i+c$ allows us to write $a=b'_i+c'_i$ for some $b'_i,c'_i \in M$ such that $b'_i \mid_M b_i$ and $c'_i \mid_M c$. Because $c'_i \mid_M a$ and $c'_i \mid_M c$ for every $i \in \ldb 1,m \rdb$, Proposition~\ref{prop:Schreier interpolation} ensures the existence of an element $c' \in M$ such that $c'_i \mid_M c'$ for every $i \in \ldb 1,m \rdb$, while $c' \mid_M a$ and $c' \mid_M c$. Write $a=b'+c'$ for some $b' \in M$. For each $i \in \ldb 1,m \rdb$, take $t_i \in M$ with $c'=c'_i+t_i$. Then
  \[
    b'_i+c'_i=a=b'+c'=b'+c'_i+t_i,
  \]
  and cancellativity gives $b'_i=b'+t_i$. Thus $b' \mid_M b'_i \mid_M b_i$ for every $i \in \ldb 1,m \rdb$. Hence $a=b'+c'$ is the desired decomposition in the case $|C|=1$.

  Now assume that $n:=|C|\ge 2$, and let $c_1,\dots,c_n$ be the elements of~$C$. Applying the singleton case to $B$ and $\{c_j\}$ for each $j \in \ldb 1,n \rdb$, we obtain elements $b'_j,c'_j \in M$ such that $a=b'_j+c'_j$, where $b'_j$ is a common divisor of $B$ and $c'_j \mid_M c_j$. Since $b'_j \mid_M a$ and $b'_j \mid_M b_i$ for every $(i,j) \in \ldb 1,m \rdb \times \ldb 1,n \rdb$, Proposition~\ref{prop:Schreier interpolation} guarantees the existence of an element $b' \in M$ such that $b'_j \mid_M b'$ for every $j \in \ldb 1,n \rdb$, while $b' \mid_M a$ and $b' \mid_M b_i$ for every $i \in \ldb 1,m \rdb$. Write $a=b'+c'$ for some $c' \in M$. For each $j \in \ldb 1,n \rdb$, take $t_j \in M$ with $b'=b'_j+t_j$. Then
  \[
    b'_j+c'_j=a=b'+c'=b'_j+t_j+c',
  \]
  and cancellativity gives $c'_j=t_j+c'$. Therefore $c' \mid_M c'_j \mid_M c_j$ for every $j \in \ldb 1,n \rdb$. Thus $a=b'+c'$, where $b'$ and $c'$ are common divisors of $B$ and $C$, respectively.
\end{proof}

We are in a position to prove the main theorem of this section.

\begin{theorem}
  Let $F$ be a field, and let $M$ be a cancellative and torsion-free pre-Schreier monoid. If $M$ is an MCD monoid then $F[M]$ is an MCD domain.
\end{theorem}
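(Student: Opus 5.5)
The plan is to split every common divisor of a finite set of polynomials into a ``non-monomial part'' and a ``monomial part''. The non-monomial part will be controlled by a finiteness argument on supports coming from Lemma~\ref{lem:exponent sum condition}. The monomial part will be handled by the MCD property of $M$, with the pre-Schreier hypothesis (through Lemma~\ref{lem:primal singletons}) making the two parts interact correctly.

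Fix nonzero $f_1, \dots, f_n \in F[M]$. I will use two basic facts, writing $M$ additively and fixing a compatible total order $\preceq$.
\begin{itemize}
\item First, a monomial $x^e$ divides $h \in F[M]$ if and only if $\text{supp}\, h \subseteq e + M$, that is, if and only if $e$ is a common divisor of $\text{supp}\, h$ in $M$. Consequently, a common monomial divisor of $\{f_1,\dots,f_n\}$ corresponds to a common divisor in $M$ of the finite set $\bigcup_i \text{supp}\, f_i$.
\item Second, if $f = gh$, then Lemma~\ref{lem:exponent sum condition} applied to the pair $(g,h)$ shows that every difference of two elements of $\text{supp}\, g$ is a $\zz$-combination, with bounded coefficients, of differences of elements of $\text{supp}\, f$.
\end{itemize}
Thus the normalized support (\emph{shape}) $\text{supp}\, g - \ord g \subseteq \gp(M)$ of any divisor $g$ of $f_1$ lies in a finite set $D$ determined by $f_1$. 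This is the substitute, in general monoid algebras, for the degree bound available in $F[x]$.

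Using this, I will choose a common divisor $g$ of the $f_i$ whose shape is maximal. Concretely, I consider common divisors modulo monomial factors and units, and I partially order them by divisibility. Since there are only finitely many shapes, and a proper non-monomial divisor $g \mid g'$ (with non-monomial quotient) strictly enlarges the shape, I expect ascending chains of non-monomial quotients to have bounded length. This yields a common divisor $g$ such that no common divisor of $\{f_i/g\}$ is a non-monomial nonunit. Next, set $h_i := f_i/g$ and $T := \bigcup_i \text{supp}\, h_i$. Since $M$ is an MCD monoid, I take $e \in \mcd_M(T)$, and I claim that $d := g x^e$ is an MCD of $\{f_1, \dots, f_n\}$. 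Suppose $q$ is a common divisor of $\{h_i x^{-e}\}$. If $q$ is a monomial $x^{u}$, then $u$ is a common divisor of $T - e$, so $u \in M^\times$ because $e$ is an MCD, and hence $q$ is a unit. If $q$ is not a monomial, then $qx^e$ is a non-monomial common divisor of the $h_i$, which contradicts the choice of $g$.

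The main obstacle is the maximality step for the non-monomial part. The difficulty is that a larger common divisor $g'$ may differ from $g$ by both a non-monomial factor and monomial factors, and those monomial factors could ``absorb'' exponents in a way that lowers the available monomial divisors of the $h_i$. This is precisely where pre-Schreier is needed. Lemma~\ref{lem:primal singletons} allows a common divisor $a$ of a sumset $\text{supp}\, q + \text{supp}\, h'$ (the support data of a product) to be split as $a = b + c$, with $b$ dividing $\text{supp}\, q$ and $c$ dividing $\text{supp}\, h'$. Hence monomial divisors of products can be distributed among the factors, and the choice of maximal shape is compatible with the subsequent choice of $e$.

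The first thing I would verify is the finiteness of shapes via Lemma~\ref{lem:exponent sum condition}. I would then verify that the monomial contents behave additively under multiplication, up to this splitting, so that the two-stage choice ($g$ first, then $e$) cannot be improved by a mixed divisor.
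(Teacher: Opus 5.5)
\textbf{There is a genuine gap: the finiteness-of-shapes and bounded-chain arguments meant to produce your maximal $g$ are false.}

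Your second stage is sound. Suppose $g$ is a common divisor of $f_1,\dots,f_n$ in $F[M]$ such that every common divisor of $\{f_i/g\}$ in $F[M]$ is a monomial. Then $gx^e$ with $e \in \mcd_M(T)$ is an MCD, and this step does not even use the pre-Schreier hypothesis. The gap is the existence of such a $g$.

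Take $F=\ff_p$ and $M=\nn_0[1/p]$, which is a valuation monoid and hence pre-Schreier and MCD. Since $(1+x^{1/p^n})^{p^n}=1+x$, every $1+x^{1/p^n}$ divides $1+x$. So the divisors of $1+x$ have infinitely many shapes $\{0,1/p^n\}$. Moreover, $1/p^n$ is not even in the subgroup $\zz$ generated by the differences of $\text{supp}(1+x)=\{0,1\}$, so your second bullet fails outright. The identity in Lemma~\ref{lem:exponent sum condition} involves the $|\text{supp}\,h|$-fold sumset of $\text{supp}\,g$, and $|\text{supp}\,h|$ is not bounded in terms of $f_1$.

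Chain arguments fail as well. The elements $g_n:=(1+x^{1/p^n})^{p^n-1}$ all divide $1+x$, and $g_{n+1}=g_n(1+x^{1/p^{n+1}})^{p-1}$. This is an infinite strictly ascending chain of common divisors of $\{1+x\}$ with non-monomial quotients. The same happens in characteristic zero with $\prod_{k=1}^n(1+x^{1/2^k})$, which divides $x-1$ in $\cc[\nn_0[1/2]]$. So the required $g$ cannot come from a finiteness or chain condition; it has to be constructed.

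The missing idea, which is what the paper does, is to compute the non-monomial part in a GCD overring and use the pre-Schreier property to pull it back into $F[M]$.
\begin{enumerate}
\item Fix a compatible total order on $G=\gp(M)$ and set $V=G_{\succeq 0}$, so that $F[V]$ is a GCD domain.
\item Normalize each $f_i$ to $t_i:=\alpha_i^{-1}x^{-\ord f_i}f_i\in F[V]$, where $\alpha_i$ is the coefficient of the lowest term. Take a GCD $g_V$ of the $t_i$ in $F[V]$ with constant term $1$, and write $t_i=g_Vk_i$.
\item Choose $b\in M$ with $x^bg_V\in F[M]$, so that $b\in\text{supp}(x^bg_V)$. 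Let $c$ be an MCD in $M$ of this support, and set $m:=x^{b-c}g_V$. Then $\text{supp}\,m$ has only unit common divisors in $M$.
\item The pre-Schreier hypothesis enters exactly here. Lemma~\ref{lem:exponent sum condition}, together with repeated use of Lemma~\ref{lem:primal singletons}, shows that each cofactor $x^{\ord f_i-(b-c)}k_i$ lies in $F[M]$. That is, $m$ divides every $f_i$ in $F[M]$.
\item A common divisor of these cofactors in $F[M]$, after being shifted to order $0$, divides every $k_i$ in $F[V]$. Since the $k_i$ have GCD $1$ there, it must be a monomial.
\end{enumerate}
Hence $m$ is precisely the $g$ your second stage requires, and your choice of $e$ then finishes the proof. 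The paper removes the monomial part first, dividing by an MCD of $\bigcup_i\text{supp}\,f_i$, but the order is immaterial.
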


\begin{proof}
  Let $G$ be the Grothendieck group of $M$. As $M$ is cancellative, we can identify~$M$ as a submonoid of~$G$. As $M$ is torsion-free, $G$ is a torsion-free abelian group. Thus, Levi's theorem allows us to pick a total order $\preceq$ on $G$ such that $G$ is a linearly ordered group with respect to~$\preceq$. Set $V:=G_{\succeq 0}$. Then $V$ is a GCD monoid, and it follows from~\cite[Theorem~14.5]{rG84} that $F[V]$ is a GCD domain.

  Let $s_1(x),\dots,s_k(x)$ be pairwise distinct nonzero elements of $F[M]$. Since $M$ is an MCD monoid, the finite nonempty subset
  \[
    E:=\bigcup_{i=1}^k \text{supp}\,s_i(x)
  \]
  has an MCD $r \in M$. For each $i \in \ldb 1,k \rdb$, set $s'_i(x):=x^{-r}s_i(x)$, and set $S':=\{s'_1(x),\dots,s'_k(x)\}$. It is enough to prove that $S'$ has an MCD in $F[M]$: if $m(x)$ is an MCD of $S'$ then $x^rm(x)$ is an MCD of $\{s_1(x),\dots,s_k(x)\}$. By the choice of $r$, the only common divisors in $M$ of the set $\bigcup_{i=1}^k \text{supp}\,s'_i(x)$ are units.

  For each $i \in \ldb 1,k \rdb$, let $\alpha_i \in F^*$ be the coefficient of the term of $s'_i(x)$ of minimum degree, let $o_i:=\ord\,s'_i(x)$, and set
  \[
    t_i(x):=\alpha_i^{-1}x^{-o_i}s'_i(x) \in 1+F[G_{\succ 0}].
  \]
  Since $F[V]$ is a GCD domain, the finite set $T:=\{t_1(x),\dots,t_k(x)\}$ has a GCD in $F[V]$. Choose such a GCD $g(x)$ and scale it so that its constant coefficient is $1$. Then $\ord\,g(x)=0$, and, for every $i \in \ldb 1,k \rdb$, we can write
  \[
    t_i(x)=g(x)h_i(x)
  \]
  for some $h_i(x) \in F[V]$ with $\ord\,h_i(x)=0$.

  Write
  \[
    g(x)=1+\sum_{j=1}^n \lambda_jx^{a_j-b_j},
  \]
  where $n \in \nn_0$, $\lambda_1,\dots,\lambda_n \in F^*$, and $a_1,b_1,\dots,a_n,b_n \in M$, with $a_j-b_j$ running through the nonzero exponents in $\text{supp}\,g(x)$. Set $b:=0$ if $n=0$ and $b:=b_1+\dots+b_n$ otherwise. Then $f(x):=x^bg(x)$ belongs to $F[M]$, and the term $x^b$ belongs to $\text{supp}\,f(x)$. Let $d \in M$ be an MCD of $\text{supp}\,f(x)$ in $M$. Since $d \mid_M b$, we can write $q:=b-d \in M$. Now set
  \[
    m(x):=x^{-d}f(x)=x^qg(x) \in F[M].
  \]
  Since $d$ is an MCD of $\text{supp}\,f(x)$, the only common divisors in $M$ of $\text{supp}\,m(x)$ are units.

  We first prove that $m(x)$ divides every element of $S'$ in $F[M]$. Fix $i \in \ldb 1,k \rdb$ and set $\ell_i:=|\text{supp}\,h_i(x)|$. Applying Lemma~\ref{lem:exponent sum condition} in $F[V]$ to the equality $x^qt_i(x)=m(x)h_i(x)$ gives
  \[
    \ell_i\,\text{supp}\,m(x)+\text{supp}\,h_i(x)
    = (\ell_i-1)\,\text{supp}\,m(x)+\{q\}+\text{supp}\,t_i(x).
  \]
  We claim that $x^{o_i-q}h_i(x) \in F[M]$. To prove this, fix $\eta \in \text{supp}\,h_i(x)$ and set $\gamma:=o_i+\eta \in G$. Since $o_i+\text{supp}\,t_i(x)=\text{supp}\,s'_i(x) \subseteq M$, the displayed equality shows that, for all $\mu_1,\dots,\mu_{\ell_i} \in \text{supp}\,m(x)$, there exists $\delta \in M$ such that
  \[
    \mu_1+\dots+\mu_{\ell_i}+\gamma=q+\delta
  \]
  in $G$. Write $\gamma=u-v$ with $u,v \in M$. Then $q+v$ is a common divisor in $M$ of $\ell_i\,\text{supp}\,m(x)+\{u\}$. Repeatedly applying Lemma~\ref{lem:primal singletons} yields
  \[
    q+v=e_1+\dots+e_{\ell_i}+u'
  \]
  where each $e_j$ is a common divisor of $\text{supp}\,m(x)$ in $M$ and $u' \mid_M u$. Each $e_j$ is a unit, and so $q+v$ is associate to $u'$. Hence $q+v \mid_M u$, which means that $\gamma-q \in M$. Thus $o_i-q+\eta \in M$ for every $\eta \in \text{supp}\,h_i(x)$, proving the claim.

  The claim gives
  \[
    s'_i(x)=\alpha_i x^{o_i}t_i(x)=\alpha_i m(x)\big(x^{o_i-q}h_i(x)\big)
  \]
  with $x^{o_i-q}h_i(x) \in F[M]$. Hence $m(x)$ is a common divisor of $S'$ in $F[M]$.

  It remains to prove that $m(x)$ is maximal. For each $i$, set $Q_i(x):=x^{o_i-q}h_i(x) \in F[M]$, so that $s'_i(x)=\alpha_i m(x)Q_i(x)$. Let $w(x) \in F[M]$ be a common divisor of $Q_1(x),\dots,Q_k(x)$. Write $\theta:=\ord\,w(x)$ and $w_0(x):=x^{-\theta}w(x) \in F[V]$. Since $w(x)$ divides each $Q_i(x)$ in $F[M]$, the element $w_0(x)$ divides each $h_i(x)$ in $F[V]$: indeed, the corresponding quotient lies in $F[G]$, and its order is $0$ because both $w_0(x)$ and $h_i(x)$ have order $0$. Because $g(x)$ is a GCD of $T$ in $F[V]$, the only common divisors of $h_1(x),\dots,h_k(x)$ in $F[V]$ are units. Therefore $w_0(x) \in F^*$, and so $w(x)$ is a monomial in $F[M]$.

  If $w(x)=\lambda x^\theta$ with $\lambda \in F^*$ then $x^\theta$ divides every $Q_i(x)$ in $F[M]$, and therefore $x^\theta$ divides every $s'_i(x)$ in $F[M]$. Hence $\theta$ is a common divisor in $M$ of $\bigcup_{i=1}^k \text{supp}\,s'_i(x)$, so $\theta$ is a unit of $M$. Thus $w(x)$ is a unit of $F[M]$. Therefore the only common divisors of $S'/m(x)$ are units, and $m(x)$ is an MCD of $S'$ in $F[M]$.
\end{proof}

\smallskip
\subsection{On the Ascent of Atomicity to Monoid Algebras over Fields}

In \cite{mR93}, Roitman disproved the ascent of atomicity to polynomial extensions, but he identified the MCD property as a sufficient condition for atomicity to ascend to polynomial extensions: for any MCD domain $R$, the polynomial ring $R[x]$ is atomic provided that $R$ is atomic \cite[Proposition~1.1]{mR93}. 

Although the ascent of atomicity to monoid algebras over fields has been considered in several recent papers (see \cite{GR25} and references therein), it is still unknown whether atomicity ascends to monoid algebras over a given field when restricted to the class of MCD monoids. 

We are in a position to establish the last main result of this section.

\begin{theorem}
  For each $p \in \pp$, there exists a rank-$1$ torsion-free MCD monoid~$M$ that is atomic but whose monoid algebra $\ff_p[M]$ is not atomic.
\end{theorem}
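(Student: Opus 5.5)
Given $p \in \pp$, we will use the construction from~\cite{CG19}, refined in~\cite{GGP25}, which produces an atomic rank-$1$ monoid whose monoid algebra over $\ff_p$ fails to be atomic. We then check that the monoid can be taken to be an MCD monoid. A natural template is an additive submonoid $M \subseteq \qq_{\ge 0}$ generated by atoms whose denominators involve a sequence of primes $(q_n)_{n \ge 1}$ together with powers of $p$. The powers of $p$ are there to exploit the Frobenius identity
\[
  \Big(1 - x^{\frac{1}{p^{n+1}}}\Big)^{p} = 1 - x^{\frac1{p^n}}
\]
in $\ff_p[M]$. The monoid will be built so that the exponents $\frac1{p^n}$ themselves are not in $M$ as atoms, but so that the binomials $1 - x^{c/p^{n}}$ (for suitable multipliers $c$) lie in $\ff_p[M]$ and divide one another in a non-stabilizing way.

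\textbf{Step 1.} Define $M$ explicitly, for instance
\[
  M := \Big\langle \tfrac{1}{p^{n} q_n},\ \tfrac{q_n - 1}{p^n q_n} \ \text{-type generators} : n \in \nn \Big\rangle,
\]
with the $q_n$ pairwise distinct primes different from $p$, chosen so that the relevant element (say $1$ or $\frac1p$ times a product) has the form $a_n + b_n$ with $a_n, b_n$ atoms at every level $n$. Then verify that $M$ is atomic. This goes through the uniqueness of the $q_n$-adic valuations of the generators: each generator is the unique one whose denominator is divisible by $q_n$, which forces every element to have only finitely many generators available as divisors. This is the same bookkeeping as in Lemma~\ref{lem:CSD 1}.

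\textbf{Step 2.} Show that $M$ is an MCD monoid. The plan is to prove a canonical sum decomposition analogous to Lemma~\ref{lem:CSD 1} and Proposition~\ref{prop:canonical decomposition}, with coefficient of the $n$-th generator in $\ldb 0, q_n - 1\rdb$. Then, for a finite set $S$, we imitate the proof of Proposition~\ref{prop:MPk-is-MCD}: only finitely many indices $n$ appear in the decompositions of elements of $S$, so every common divisor is bounded by an element of a finite set of candidates. A maximal candidate is then an MCD. If feasible, a slicker route is to show that $M$ satisfies the ACCP, since ACCP implies MCD, as observed in the introduction. I would try the ACCP route first, because it also yields atomicity.

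\textbf{Step 3, the main obstacle.} Show that $\ff_p[M]$ is not atomic. We exhibit an element, e.g.\ $f = 1 - x^{r}$ for a suitable $r \in M$, that admits no factorization into irreducibles. Using the Frobenius identity, $f$ equals a $p$-th power $(1 - x^{r/p})^p$, and $1 - x^{r/p}$ again lies in $\ff_p[M]$. This gives an infinite descending divisor chain $f = g_n^{p^n}$. The delicate part is to show that any factorization of $f$ into irreducibles leads to a contradiction. I would argue via degrees: every divisor of $f$ can be written as a product of cyclotomic-type factors $1 - \zeta x^{r/m}$ over $\overline{\ff_p}[\qq_{\ge 0}]$, and in characteristic $p$ the only roots of unity involved have order prime to $p$. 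Hence each irreducible divisor of $f$ in $\ff_p[M]$ is itself a $p$-th power of an element of $\ff_p[M]$, contradicting irreducibility. Making this rigorous requires controlling which exponents lie in $M$, so that the $p$-th roots stay in $\ff_p[M]$. I expect this to use the specific arithmetic of the generators, following~\cite{GGP25}.
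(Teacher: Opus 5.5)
\textbf{Steps~1--2 cannot go the way you plan.} The ACCP route is impossible. For a submonoid $M$ of $\qq_{\ge 0}$, suppose $f_n = f_{n+1}g_n$ in $F[M]$ for all $n$. Then $\deg f_{n+1} \mid_M \deg f_n$, so ACCP in $M$ forces $\deg g_n = 0$, i.e.\ $g_n \in F^*$, for large $n$. Thus ACCP ascends from $M$ to $F[M]$, and ACCP domains are atomic. Any monoid proving the theorem must therefore fail ACCP. Your own Step~3 witnesses this: $g_n = g_{n+1}^p$ with $g_n = 1-x^{r/p^n}$ gives $r/p^{n+1} \mid_M r/p^n$ strictly.

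For the same reason, your Step~1 claim that each element has only finitely many atom divisors is false. Every $r/p^n$ divides $r$ and is itself divisible by an atom of size at most $r/p^n$. (An atomic monoid with that finiteness would be FF, hence ACCP.) Note also that $\frac{q_n-1}{p^nq_n}$ is a multiple of $\frac{1}{p^nq_n}$. So the monoid you wrote down is just $M=\langle 1/(p^nq_n) : n \in \nn\rangle$, and it has no two atoms summing to $1/p^n$.

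Your fallback for the MCD property (only finitely many indices appear, hence finitely many candidates) breaks for the same non-ACCP reason: the common divisors of $S$ include atoms of arbitrarily large index. The paper handles this with two claims specific to its generators $a_n,b_n$ with $a_n+b_n=1/p^n$. First, for $n$ beyond a bound depending on $m$, $ha_n \mid_M m$ or $hb_n \mid_M m$ holds exactly when $h/p^n \mid_M m$. Second, every $m$ has a largest divisor in $\nn_0[1/p]$. An MCD is then built greedily from the finitely many small-index atoms plus the smallest of these $\nn_0[1/p]$-parts.

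\textbf{Step~3 is a non sequitur.} Every $h \in \ff_p[\qq_{\ge 0}]$ is already a $p$-th power there, namely $h = h(x^{1/p})^p$. So the remark about roots of unity of order prime to $p$ carries no information. What you need is $\tfrac1p\,\text{supp}\,h \subseteq M$ for every irreducible divisor $h$ of $f$, and nothing controls this once other primes enter the denominators.

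In fact it fails for the monoid you wrote down, with $r=1$. The element $1-x^{1/(p^nq_n)}$ divides $1-x$ in $\ff_p[M]$, because $1-x = \big(1-x^{1/(p^nq_n)}\big)^{p^n}\sum_{j=0}^{q_n-1}x^{j/q_n}$. It is irreducible: every element of $M$ below $1/(p^nq_n)$ has denominator prime to $q_n$. So in a nontrivial factorization $1-x^{1/(p^nq_n)}=AB$, substituting $x \mapsto x^N$ for a suitable $N$ with $q_n \nmid N$ puts $A(x^N)B(x^N)$ in $\ff_p[x]$ but not $1-x^{N/(p^nq_n)}$. Finally, it is not a $p$-th power in $\ff_p[M]$, since its unique $p$-th root $1-x^{1/(p^{n+1}q_n)}$ does not lie in $\ff_p[M]$.

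The missing idea is an element whose divisors in $\ff_p[M]$ are completely controlled. The paper fixes one integer $q \ge 2$ coprime to $p$ and uses a monoid with $\nn_0[1/p] \subseteq M \subseteq \zz[1/p,1/q]$. It takes $P_q \in \ff_p[x]$ with $P_q(x^{q^n})$ irreducible in $\ff_p[x]$ for all $n$ \cite[Theorem~5.6]{GGP25}. Substituting $x \mapsto x^{(pq)^k}$ turns a factorization $P_q = fg$ in $\ff_p[M]$ into one of $P_q(x^{q^k})^{p^k}$ in $\ff_p[x]$. So every divisor of $P_q$ is $P_q(x^{1/p^k})^t$ up to a unit, and $P_q(x^{1/p^k}) = P_q(x^{1/p^{k+1}})^p$ then shows that $P_q$ has no irreducible divisor.

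This mechanism needs a fixed $q$. If $P(x^\ell)$ is irreducible over $\ff_p$ for a prime $\ell$, then $P$ is irreducible and $\ell$ divides the multiplicative order of its roots, hence $p^{\deg P}-1$. So no single polynomial survives $x \mapsto x^\ell$ for infinitely many primes $\ell$, and a template built on infinitely many distinct primes $q_n$ cannot use it.
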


\begin{proof}
  By~\cite[Theorem~5.6]{GGP25}, there exist an integer $q \in \nn_{\ge 2}$ with $\gcd(p,q)=1$ and a nonconstant polynomial $P_q(x) \in \ff_p[x]$ of degree at most~$2$ such that $P_q\big(x^{q^n}\big)$ is irreducible in $\ff_p[x]$ for every $n \in \nn$. Set $\ell_0:=0$, and let $(\ell_n)_{n \ge 1}$ be a strictly increasing sequence of positive integers such that $q^{\ell_n-2\ell_{n-1}} > 2p^{n+1}$ for every $n \in \nn$. For each $n \in \nn$, set
  \[
    a_n := \frac{p^n q^{\ell_n} - 1}{2p^{2n}q^{\ell_n}} \quad \text{ and } \quad  b_n := \frac{p^n q^{\ell_n} + 1}{2p^{2n}q^{\ell_n}}.
  \]
  In~\cite[Proposition~5.2]{GGP25}, the authors proved that the rank-$1$ torsion-free monoid $M_{p,q}$ generated by the set $A_{p,q}:=\{a_n,b_n : n \in \nn\}$ is atomic with set of atoms $A_{p,q}$ and satisfies
  \[
    \nn_0\bigg[\frac1p\bigg] \subseteq M_{p,q} \subseteq \zz\bigg[\frac1p, \frac1q\bigg].
  \]
  Set $M:=M_{p,q}$. We first argue that $M$ is an MCD monoid. For this, we prove the following claim.
  \smallskip

  \noindent \textsc{Claim 1.} For each $m \in M$, there exists $N \in \nn_0$ such that, for all integers $n>N$ and $h \in \nn_0$, the following are equivalent:
  \[
    ha_n \mid_M m \text{ or } hb_n \mid_M m, \qquad ha_n+hb_n \mid_M m.
  \]
  \smallskip

  \noindent \textsc{Proof of Claim 1.} Fix $m \in M$. Let $N \in \nn$ be such that $\nu_q(m)>-\ell_{N-1}$ and $q^{\ell_{N-1}}-1>m$. Let $n>N$ be a positive integer.

  Suppose, for the sake of contradiction, that there exists $h \in \nn$ such that $ha_n \mid_M m$ but $ha_n+hb_n \nmid_M m$. Thus, there exist an integer $L$ and an atomic decomposition
  \[
    m = \sum_{i=1}^L \alpha_i a_i + \sum_{i=1}^L \beta_i b_i,
  \]
  where $\alpha_i,\beta_i \in \nn_0$ for every $i \in \ldb 1,L \rdb$ and $\alpha_n \ne \beta_n$. Let $j \in \nn_{\ge n}$ be the largest integer at most $L$ such that $\alpha_j \ne \beta_j$. Then, for each $i \in \ldb 1,j-1 \rdb$, we have $\nu_q(\alpha_i a_i+\beta_i b_i) \ge -\ell_i \ge -\ell_{j-1}$. Similarly, for each $i \in \ldb j+1,L \rdb$, the maximality of $j$ gives $\alpha_i=\beta_i$, and so
  \[
    \nu_q(\alpha_i a_i+\beta_i b_i)=\nu_q\bigg(\alpha_i \cdot \frac{p^iq^{\ell_i}-1}{2p^{2i}q^{\ell_i}}+\alpha_i \cdot \frac{p^iq^{\ell_i}+1}{2p^{2i}q^{\ell_i}}\bigg)=\nu_q\bigg(\frac{\alpha_i}{p^i}\bigg) \ge 0 \ge -\ell_{j-1}.
  \]
  Therefore as $\nu_q(m)>-\ell_{N-1}\ge -\ell_{j-1}$, we must have $\nu_q(\alpha_j a_j+\beta_j b_j)\ge -\ell_{j-1}$. This means that $\alpha_j-\beta_j$ is divisible in $\zz$ by $q^{\ell_j-\ell_{j-1}}$. Since $\alpha_j \ne \beta_j$, it follows that $\max\{\alpha_j,\beta_j\}\ge q^{\ell_j-\ell_{j-1}}$. Assume that $\alpha_j\ge q^{\ell_j-\ell_{j-1}}$, as the other case is analogous. Then
  \[
    \alpha_j a_j \ge q^{\ell_j-\ell_{j-1}} \cdot \frac{p^jq^{\ell_j}-1}{2p^{2j}q^{\ell_j}} = \frac{p^jq^{\ell_j}-1}{2p^{2j}q^{\ell_{j-1}}} \ge q^{\ell_{j-1}}-1,
  \]
  using our bounds on the sequence $(\ell_n)_{n \ge 1}$. This contradicts $q^{\ell_{j-1}}-1>m$. Similarly, the case $hb_n \mid_M m$ and $ha_n+hb_n \nmid_M m$ is impossible. Hence Claim~1 is established.
  \smallskip

  We still need to prove the following claim.
  \smallskip
  
  \noindent \textsc{Claim 2.} For each $m \in M$, there exists $d \in \nn_0\big[1/p\big]$ such that $d \mid_M m$ and, for any $d' \in \nn_0\big[1/p\big]$, the inequality $d'>d$ implies that $d' \nmid_M m$.
  \smallskip

  \noindent \textsc{Proof of Claim 2.} If $m \in \nn_0\big[1/p\big]$ then we can simply take $d:=m$. Therefore we assume that $m \notin \nn_0\big[1/p\big]$. Take $N \in \nn$ such that $\nu_q(m)>-\ell_{N-1}$ and $q^{\ell_{N-1}}-1>m$. In light of Claim~1, we can write $m$ as
  \[
    m = u + \sum_{i=1}^N \left(\alpha_i a_i + \beta_i b_i\right),
  \]
  where $\alpha_i,\beta_i \in \nn_0$ for every $i \in \ldb 1,N \rdb$ and $u \in \nn_0\big[1/p\big]$, by taking an atomic decomposition of~$m$ and observing that $a_n+b_n \in \nn_0\big[1/p\big]$ for every $n>N$. Note that $u$ is uniquely determined by the $(2N)$-tuple $(\alpha_1,\beta_1,\dots,\alpha_N,\beta_N)$, and also that
  \[
    \alpha_i,\beta_i \le m \bigg( \frac{p^iq^{\ell_i}-1}{2p^{2i}q^{\ell_i}} \bigg)^{-1} < 2mp^{i+1}.
  \]
  This implies that there are finitely many $(2N+1)$-tuples $(\alpha_1,\beta_1,\dots,\alpha_N,\beta_N,u)$ where $\alpha_i,\beta_i \in \nn_0$ for $i \in \ldb 1,N \rdb$ and $u \in \nn_0\big[1/p\big]$ with
  \[
    m = u+\sum_{i=1}^N \left(\alpha_i a_i + \beta_i b_i \right).
  \]
  Let $d$ be the maximum value of $u$ across all such tuples. Now take $d' \in \nn_0\big[1/p\big]$ with $d'>d$ and assume, for the sake of contradiction, that $d' \mid_M m$. Since $m \notin \nn_0\big[1/p\big]$, we see that $\nu_q(m)<0$, whence
  \[
    \nu_q(m-d')=\min\{\nu_q(m),\nu_q(d')\}=\nu_q(m),
  \]
  so $\nu_q(m-d')>-\ell_{N-1}$. Also, $q^{\ell_{N-1}}-1>m>m-d'$. Thus, by the same argument as above, we can write
  \[
    m-d'=u'+\sum_{i=1}^N \left(\alpha_i'a_i + \beta_i'b_i\right),
  \]
  where $\alpha_i',\beta_i' \in \nn_0$ for every $i \in \ldb 1,N \rdb$ and $u' \in \nn_0\big[1/p\big]$. Therefore
  \[
    m=(d'+u')+\sum_{i=1}^N \left(\alpha_i'a_i + \beta_i'b_i\right).
  \]
  However, as $d'+u'>d$ and $(\alpha_1',\beta_1',\dots,\alpha_N',\beta_N',d'+u')$ is a valid $(2N+1)$-tuple from above, this contradicts the maximality of~$d$. Hence $d' \nmid_M m$ for any $d'>d$, and Claim~2 is established.
  \smallskip

  We are in a position to prove that $M$ is an MCD monoid, fix a finite subset $S=\{s_1,\dots,s_n\}$ of~$M$. Take $N \in \nn$ such that $\nu_q(s)>-\ell_N$ and $q^{\ell_N}-1>s$ for every $s \in S$, and enlarge $N$, if necessary, so that Claim~2 applies to every $s \in S$ and every index greater than $N$. We construct sequences $c_1,c_2,\dots,c_N \in \nn_0$ and $d_1,d_2,\dots,d_N \in \nn_0$ satisfying the following conditions for each $i \in \ldb 1,N \rdb$:
  \begin{itemize}
    \item $c_i$ is the largest nonnegative integer such that $c_ia_i \mid_M s-\sum_{j=1}^{i-1}(c_ja_j+d_jb_j)$ for all $s \in S$, and
    \smallskip
    
    \item $d_i$ is the largest nonnegative integer such that $d_ib_i \mid_M s-\big(c_ia_i+\sum_{j=1}^{i-1}(c_ja_j+d_jb_j)\big)$ for all $s \in S$.
  \end{itemize}
  Now, for each $i \in \ldb 1,n \rdb$, define
  \[
    s_i' := s_i - \sum_{j=1}^N \left(c_ja_j+d_jb_j\right).
  \]
  Moreover, for each $i \in \ldb 1,n \rdb$, let $r_i$ denote the largest element of $\nn_0\big[1/p\big]$ dividing $s_i'$, whose existence follows from Claim~3. Choose $i_0 \in \ldb 1,n \rdb$ such that $r_{i_0}=\min\{r_1,r_2,\dots,r_n\}$, and consider the element
  \[
    d := r_{i_0}+\sum_{j=1}^N \left(c_ja_j+d_jb_j\right).
  \]
  We claim that $d$ is an MCD of~$S$. It suffices to show that no atom divides each element of $S-d$. It is convenient to split the rest of the proof into the following cases.
  \smallskip

  \textsc{Case 1:} $a_k$ is a common divisor of $S-d$ for some $k \le N$. Fix $s \in S$. As $a_k \mid_M s-d$, we have $a_k \mid_M s-d+m$ for any $m \in M$. Taking
  \[
    m = r_{i_0}+d_kb_k+\sum_{j=k+1}^N \left(c_ja_j+d_jb_j\right),
  \]
  we obtain
  \[
    a_k \mid_M s-\bigg( c_ka_k+\sum_{j=1}^{k-1}\left(c_ja_j+d_jb_j\right)\bigg).
  \]
  Hence $(c_k+1)a_k \mid_M s-\sum_{j=1}^{k-1}(c_ja_j+d_jb_j)$, contradicting the maximality of $c_k$.
  \smallskip

  \textsc{Case 2:} $b_k$ is a common divisor of $S-d$ for some $k \le N$. In this case, an argument similar to that used in Case~1 yields
  \[
    (d_k+1)b_k \mid_M s-\bigg(c_ka_k+\sum_{j=1}^{k-1}(c_ja_j+d_jb_j)\bigg)
  \]
  for every $s \in S$, contradicting the maximality of~$d_k$.
  \smallskip

  \textsc{Case 3:} $a_k$ is a common divisor of $S-d$ for some $k>N$. Fix $s \in S$ and note that
  \[
    \nu_q(d) \ge \min\left\{\nu_q(r_{i_0}),\nu_q(c_1a_1+d_1b_1),\dots,\nu_q(c_Na_N+d_Nb_N)\right\} \ge -\ell_N.
  \]
  Now,
  \[
    \nu_q(s-d) \ge \min\{\nu_q(s),\nu_q(d)\} \ge -\ell_N,
  \]
  and $q^{\ell_N}-1>s\ge s-d$. The argument in the proof of Claim~2 now shows that if $a_k \mid_M s-d$ then $a_k+b_k \mid_M s-d$ as well. However, $a_k+b_k=1/p^k \in \nn_0\big[1/p\big]$ and divides $s_{i_0}-d=s_{i_0}'-r_{i_0}$. Thus, $r_{i_0}+1/p^k \mid_M s_{i_0}'$, contradicting the maximality of $r_{i_0}$.
  \smallskip

  \textsc{Case 4:} $b_k$ is a common divisor of $S-d$ for some $k>N$. By similar reasoning, $r_{i_0}+1/p^k \mid_M s_{i_0}'$, again contradicting the maximality of $r_{i_0}$.
  \bigskip

  We have therefore proved that none of the atoms of $M$ is a common divisor of $S-d$ in $M$, so $d$ is a MCD of~$S$ in $M$. Hence $M$ is an MCD monoid.
  \smallskip

  It only remains to show that the monoid algebra $\ff_p[M]$ is not an atomic domain. For this, we first observe that $P_q(x^{1/p^k}) \in \ff_p[M]$ for all $k \in \nn_0$ because $\nn_0[1/p] \subseteq M$. Let $f$ be a factor of $P_q$ in $\ff_p[M]$, and write $P_q = fg$ for some $g \in \ff_p[M]$. Then we can pick $k \in \nn_0$ such that
  \[
    f\big(x^{(pq)^k}\big)g\big(x^{(pq)^k}\big) = P_q\big(x^{(pq)^k}\big) = P_q\big(x^{q^k}\big)^{p^k}
  \]
  in $\ff_p[x]$. Since $P_q\big(x^{q^k}\big)$ is irreducible in the UFD $\ff_p[x]$, we can write
  \[
    f\big(x^{(pq)^k}\big) = P_q\big(x^{q^k}\big)^t
  \]
  for some $t \in \nn_0$. After changing variables, one sees that $f = P_q\big(x^{1/p^k}\big)^t$. Thus, each nonunit factor of $P_q$ in $\ff_p[M]$ has the form $P_q\big(x^{1/p^k}\big)^t$ for some $k \in \nn_0$ and $t \in \nn$.

  Now assume, towards a contradiction, that $\ff_p[M]$ is atomic. Then $P_q = a_1 \cdots a_n$ for some irreducibles $a_1, \dots, a_n$ in $\ff_p[M]$. Since $a_1$ is a nonunit factor of $P_q$, we can pick $k \in \nn_0$ and $t \in \nn$ such that $a_1 = P_q\big(x^{1/p^k}\big)^t$. As $a_1$ is irreducible, the equality $t=1$ holds. However,
  \[
    a_1 = P_q\big(x^{1/p^k}\big) = P_q\big(x^{1/p^{k+1}}\big)^p,
  \]
  which contradicts the irreducibility of $a_1$ in $\ff_p[M]$. Hence $\ff_p[M]$ is not atomic.
\end{proof}

\bigskip
\section{The MCD-Finite Property}
\label{sec:the MCD-finite property}

This section is devoted to the study of the MCD-finite property.

\medskip
\subsection{A Class of Rank-1 MCD-finite Monoids}

We begin this section by proving that the monoids $M_{P,k}$ we introduced in~\eqref{eq:main monoid 1}, which we proved satisfy the MCD property, also satisfy the MCD-finite property. Recall that
\[
  M_{P,k} := \Big\langle \tfrac{1}{p_j p_{j+2} \dots p_{j+2k}} : j \in \nn \Big\rangle \subseteq \qq_{\ge 0},
\]
where $(p_n)_{n \ge 1}$ is a strictly increasing sequence of primes with underlying set $P$ and $k \in \nn$. We have also proved in Lemma~\ref{lem:CSD 1} that every nonzero $r \in M_{P,k}$ has a canonical sum decomposition as follows:
\[
  r = c_{k-1}(r) + \sum_{j=1}^{n(k)} c_j(r) a_{j,k}
\]
for an index $n(k) \in \nn_0$, an element $c_{k-1}(r) \in M_{P,k-1}$, and some coefficients $c_1(r), \dots, c_{n(k)}(r) \in \nn_0$ such that $c_j(r) \in \ldb 0, p_{j+2k} - 1 \rdb$ for every $j \in \ldb 1,n(k) \rdb$ while $c_{n(k)}(r) \ge 1$. Let us prove that every monoid $M_{P,k}$ is an MCD-finite monoid.

\begin{proposition} \label{prop:MPk-is-MCD-finite}
  For an infinite subset $P$ of $\pp$ and $k \in \nn$, let $M_{P,k}$ be the monoid defined in~\eqref{eq:main monoid 1}. Then $M_{P,k}$ is an MCD-finite monoid.
\end{proposition}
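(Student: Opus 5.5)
Since $M_{P,k}$ is reduced, associates are equal, and we have to show that every nonempty finite subset $S \subseteq M_{P,k}$ has only finitely many MCDs. The plan is an induction on $k$, parallel to the proof of Proposition~\ref{prop:MPk-is-MCD}. The base case $k=1$ should be handled by the same argument as the inductive step, using that $M_{P,0}=\langle 1/p_j : j \in \nn\rangle$ is a rank-$1$ monoid whose divisibility is controlled by the $p_j$-adic valuations. If needed, I would check directly that every finite subset of $M_{P,0}$ has finitely many MCDs, or cite the corresponding result of~\cite{LWZ24}.

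For the inductive step, assume that $M_{P,k-1}$ is MCD-finite. We may assume $0 \notin S$, and we use the canonical sum decompositions of Lemma~\ref{lem:CSD 1} for the elements of $S$, with $m := \max\{n(s) : s \in S\}$ as before. Let $d$ be an MCD of $S$ in $M_{P,k}$. The key claim is that the projection $c_{k-1}(d)$ is an MCD of $C := \{c_{k-1}(s) : s \in S\}$ in $M_{P,k-1}$, and that the coefficients of $d$ satisfy $c_j(d) \le \max\{c_j(s) : s \in S\}$ for all $j$, vanishing for $j>m$.

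\begin{itemize}
\item \emph{$c_{k-1}(d)$ is a common divisor of $C$.} Writing $s = d + (s-d)$ and normalizing, the projection of $s$ equals $c_{k-1}(d) + c_{k-1}(s-d)$ plus carries of the form $a_{j,k-1}$. Hence $c_{k-1}(d)$ divides $c_{k-1}(s)$ in $M_{P,k-1}$. This is exactly the observation already used in the proof of Proposition~\ref{prop:MPk-is-MCD}.
\item \emph{$c_{k-1}(d)$ is maximal.} If some nonzero $e' \in M_{P,k-1}$ divided every $c_{k-1}(s) - c_{k-1}(d)$, I would try to show that $e'$ (or a suitable element built from $e'$ and the carries) divides every element of $S-d$, contradicting that $d$ is an MCD.
\item \emph{Bound on the coefficients.} The bound $c_j(d) \le \max_s c_j(s)$ should follow from the same valuation argument as in the finiteness claim of Proposition~\ref{prop:MPk-is-MCD}: if $c_j(d)$ exceeded every $c_j(s)$, or if $j>m$, a carry would be forced in every $s$.
\end{itemize}

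Granting the key claim, the induction hypothesis gives only finitely many choices for $c_{k-1}(d)$. The coefficients $c_1(d), \dots, c_m(d)$ are bounded, so they also take finitely many values. By the uniqueness in Lemma~\ref{lem:CSD 1}, $d$ is determined by these data, so $\mcd(S)$ is finite.

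The main obstacle is the interaction of carries. An MCD $d$ could a priori absorb part of some $s$'s $M_{P,k-1}$-component through carries coming from the $a_{j,k}$-coefficients. Such carries differ from element to element, which makes both the maximality of $c_{k-1}(d)$ and the finiteness of the possible projections delicate. To handle this, I would bound the total carry: each carry is some $a_{j,k-1}$ with $j \le m$ and multiplicity at most $1$ per index, since the canonical coefficients are less than $p_{j+2k}$. So $c_{k-1}(d)$ is a divisor of $c_{k-1}(s)+\sum_{j\le m} a_{j,k-1}$ lying between an MCD-type element of $C$ and a finite perturbation of it. It may then suffice to show that the set of candidate projections is contained in a finite union $\bigcup_{\epsilon}\mcd_{M_{P,k-1}}(C_\epsilon)$, where $\epsilon$ ranges over the finitely many carry patterns and $C_\epsilon$ is the correspondingly adjusted copy of $C$; the induction hypothesis applies to each $C_\epsilon$.
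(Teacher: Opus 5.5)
\textbf{Gap: the finiteness of the possible projections is never proved.} Your outline coincides with the paper's: induction on $k$, the claim that the projection $q:=c_{k-1}(d)$ of an MCD $d$ of $S$ is an MCD of $T:=c_{k-1}(S)$, a bound on the $a_{j,k}$-coefficients of $d$, and uniqueness of the canonical form. But you never show that $q$ ranges over a finite set, and that is the step the whole argument rests on. Your maximality bullet is only an intention, and your last paragraph only conjectures a finite-union description. The paper handles this step in one sentence: normalization turns every common divisor $e$ of $T-q$ into a common divisor of $S-d$. Your suspicion of that sentence is justified. Normalizing $s=d+(s-d)$ gives
\[
  c_{k-1}(s)-q = c_{k-1}(s-d)+\sum_{j\in J_s} a_{j,k-1},
\]
where $J_s$ is the set of indices $j$ with $c_j(d)+c_j(s-d)\ge p_{j+2k}$. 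A common divisor of these right-hand sides is not visibly a divisor of $s-d$, whose $a_{j,k}$-coefficients stay below $p_{j+2k}$. The paper's sentence does not address these carries.

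\textbf{How your fallback closes it.} You need to fix the sign and add one observation. For a pattern $J=(J_s)_{s\in S}$, set $C_J:=\{c_{k-1}(s)-\sum_{j\in J_s}a_{j,k-1} : s\in S\}$; the carries are subtracted, not added. If $J$ is the carry pattern of $d$, then $C_J-q=\{c_{k-1}(s-d): s\in S\}$. Any common divisor of this set in $M_{P,k-1}$ divides every $s-d$ in $M_{P,k}$, because $M_{P,k-1}\subseteq M_{P,k}$ and $c_{k-1}(s-d)\mid_{M_{P,k}} s-d$. Hence $q\in\mcd_{M_{P,k-1}}(C_J)$ follows directly from the maximality of $d$, and you never need the stronger claim $q\in\mcd_{M_{P,k-1}}(T)$. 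To see that only finitely many patterns occur, note that a carry at $j$ for every $s\in S$ forces $c_j(s-d)\ge 1$ for all $s$. Then $a_{j,k}$ would be a nonzero common divisor of $S-d$, which is impossible. This one remark gives $c_j(d)=0$ for $j>m$, $c_j(d)\le\max_s c_j(s)$, and $J_s\subseteq\ldb 1,m\rdb$. So only finitely many sets $C_J$ occur, the induction hypothesis bounds $q$, and uniqueness recovers $d$ from $q$ and its bounded coefficients.

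\textbf{A caveat that applies equally to the paper's proof.} All of this bookkeeping presupposes the uniqueness in Lemma~\ref{lem:CSD 1}, and that uniqueness fails. Indeed $p_ja_{j,k}=a_{j+2,k-1}\in M_{P,k-1}$ while $p_j<p_{j+2k}$. The valuation argument in the lemma overlooks that $p_{j+2k}$ occurs in denominators of elements of $M_{P,k-1}$, for instance in $a_{j+2,k-1}$. For $k=1$ and $P=\pp$, the equalities $\tfrac{3}{10}=3a_{1,1}=\tfrac15+a_{1,1}$ give two decompositions meeting every requirement of the lemma. Projections and carries are therefore not well defined as stated. The canonical form must be repaired or bypassed before either argument is complete.
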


\begin{proof}
  We proceed by induction on the parameter $k \in \nn$. The base case $k=1$ follows from~\cite[Theorem~5.3]{LWZ24}, whose proof shows that every nonempty finite subset of $M_{P,1}$ has only finitely many MCDs. Assume that $k \ge 2$ and that $M_{P,k-1}$ is an MCD-finite monoid. Fix a nonempty finite subset $S := \{s_1, \dots, s_t\}$ of $M_{P,k}$. We will show that $S$ has only finitely many MCDs in $M_{P,k}$. As in the proof of Proposition~\ref{prop:MPk-is-MCD}, we may assume that every element of $S$ is nonzero. For each $s \in S$, write its canonical sum decomposition in $M_{P,k}$ as
  \[
    s = c_{k-1}(s) + \sum_{j=1}^{n(s)} c_j(s) a_{j,k}.
  \]
  Set $T := c_{k-1}(S) := \{c_{k-1}(s) : s \in S\}$. Then $T$ is a finite nonempty subset of $M_{P,k-1}$. We first prove the following claim.
  \smallskip

  \noindent \textsc{Claim.} If $d$ is an MCD of~$S$ in $M_{P,k}$ then $c_{k-1}(d)$ is an MCD of $T$ in $M_{P,k-1}$.
  \smallskip

  \noindent \textsc{Proof of Claim.} Let $d$ be an MCD of $S$ in $M_{P,k}$, and set $q := c_{k-1}(d)$. For each $s \in S$, apply the normalization procedure from the proof of the existence part of Lemma~\ref{lem:CSD 1} to the equality $d+(s-d)=s$. This shows that $q$ divides $c_{k-1}(s)$ in $M_{P,k-1}$. Hence $q$ is a common divisor of $T$.

  Now let $e \in M_{P,k-1}$ be a common divisor of $T-q$ in $M_{P,k-1}$. The same normalization argument shows that $e$ is a common divisor of $S-d$. Since $d$ is an MCD of $S$, the only common divisor of $S-d$ is $0$. Thus $e=0$, and so $q$ is an MCD of $T$ in $M_{P,k-1}$, as claimed.
  \smallskip

  By the induction hypothesis, the set $T$ has only finitely many MCDs in $M_{P,k-1}$. Thus it suffices to show that, for each fixed MCD $q$ of $T$ in $M_{P,k-1}$, the set
  \[
    \{ d \in M_{P,k} : d \in \mcd_{M_{P,k}}(S) \text{ and } c_{k-1}(d) = q \}
  \]
  is finite. Fix such an MCD $q$. If $d \in M_{P,k}$ is an MCD of $S$ satisfying $c_{k-1}(d)=q$ then $d=q+u$, where the projection of $u$ on $M_{P,k-1}$ is $0$. Also, $u$ is a common divisor of the finite set $S-q := \{s-q : s \in S\}$: indeed, $u=d-q \mid_{M_{P,k}} s-q$ for every $s \in S$. If a nonzero element of $M_{P,k}$ divided every element of $(S-q)-u$ then adding it to $d$ would yield a common divisor of $S$ strictly larger than $d$, contradicting that $d$ is an MCD. Thus, for fixed $q$, the possible elements $u$ are MCDs of $S-q$ whose projection on $M_{P,k-1}$ is $0$.

  It remains to observe that there are only finitely many such elements $u$. To see this, write
  \[
    u = \sum_{j=1}^n b_j a_{j,k}
  \]
  in canonical form for some $b_1, \dots, b_n$ in the discrete interval $\ldb 0,p_{j+2k}-1\rdb$ such that $b_n \ge 1$. Set $m := \max \{n(s):s \in S\}$, and put $c_j(s)=0$ whenever $j>n(s)$.

  Since $u$ divides every element of $S-q$, the normalization procedure in Lemma~\ref{lem:CSD 1}, applied to the equalities $s-q=u+(s-q-u)$, implies that, if $b_j>0$ and $b_j>c_j(s)$ for every $s\in S$ then the carry at $a_{j,k}$ contributes one copy of $a_{j,k-1}$ to every element of $T-q$. This would make $a_{j,k-1}$ a nonzero common divisor of $T-q$, contradicting that $q$ is an MCD of $T$. Hence, for every index $j$ with $b_j>0$, there exists an element $s\in S$ such that $b_j\le c_j(s)$. Therefore $j\le m$, and
  \[
    0 \le b_j \le \max \{c_j(s):s\in S\}
  \]
  for every $j$. Hence the index $n$ is bounded by $m$, and each coefficient $b_j$ can take only finitely many values. Therefore only finitely many elements $u$ can occur. It follows that, for the fixed projection $q$, there are only finitely many MCDs $d$ of $S$ satisfying $c_{k-1}(d)=q$.

  Combining this with the fact that $T$ has only finitely many MCDs in $M_{P,k-1}$, we conclude that $S$ has only finitely many MCDs in $M_{P,k}$. Therefore $M_{P,k}$ is an MCD-finite monoid.
\end{proof}

\medskip
\subsection{Ascent of the MCD-finite Property to Polynomial Rings}

The question of whether the IDF property ascends to polynomial extensions was posed by Anderson, Anderson, and Zafrullah~\cite{AAZ90} back in 1990. This remained open until Malcolmson and Okoh gave a negative answer to this question. It is well known that the IDF property is a generalization of the FF property. It turns out that the MCD-finite property does ascend to polynomial extensions.

\begin{theorem}
  If an integral domain $R$ is MCD-finite then $R[x]$ is also MCD-finite.
\end{theorem}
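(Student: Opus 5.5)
We would prove this by working inside $K[x]$, where $K$ is the field of fractions of $R$. There, greatest common divisors exist and every nonzero polynomial has only finitely many monic divisors. The plan is to sort the MCDs of a finite set $S=\{s_1,\dots,s_k\}\subseteq R[x]^*$ into finitely many classes according to their $K$-associate class. Within each class we then translate the MCD condition into an MCD condition on a finite subset of $R$, and invoke the MCD-finiteness of $R$. This avoids having to show that every MCD has the degree of $\gcd_{K[x]}(S)$, which we expect to be hard in a non-GCD domain.

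First, let $g\in K[x]$ be the monic GCD of $S$ in $K[x]$. Any common divisor $d$ of $S$ in $R[x]$ divides $g$ in $K[x]$, so $d=\lambda g'$ for a unique monic divisor $g'$ of $g$ in $K[x]$ and some $\lambda\in K^*$. Since $g$ has only finitely many monic divisors, it suffices to fix a monic divisor $g'$ of $g$. We then show that the MCDs $d$ of $S$ of the form $\lambda g'$ fall into finitely many associate classes in $R[x]$.

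Second, with $g'$ fixed, write $s_i=g'u_i$ with $u_i\in K[x]$. Choose nonzero $c,c'\in R$ with $cu_i\in R[x]$ for all $i$ and $c'g'\in R[x]$. Let $A\subseteq R$ be the finite set of nonzero coefficients of the $cu_i$, and let $B\subseteq R$ be the finite set of nonzero coefficients of $c'g'$. Fix $b_0\in B$. For $d=\lambda g'$, set $e:=c\lambda\in K^*$. Then:
\begin{itemize}
\item $d\in R[x]$ means $(e/(cc'))B\subseteq R$;
\item $d$ divides each $s_i$ in $R[x]$ exactly when $a/e\in R$ for all $a\in A$.
\end{itemize}
Put $e':=\frac{e\,b_0}{cc'}\in R$. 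Then $(b_0A)/e'=\frac{1}{cc'}\cdot(A/e)$, which suggests rescaling instead by the fixed constant $cc'$. We will adjust the normalization so that, after multiplying by a fixed element of $R^*$, the condition ``$A/e\subseteq R$'' becomes ``$e'$ is a common divisor in $R$ of the fixed finite set $A':=\{b_0cc'a\cdot(\text{fixed factor}) : a\in A\}$.'' For instance, the rescaled element $e'':=e\,b_0/c'\in R$ satisfies $A'/e''=A/e$ when $A':=(b_0/c')A$, provided we first replace $A$ by $cA$ to clear denominators. We expect this bookkeeping to go through.

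Third, we use the MCD condition. Suppose $r\in R$ is a common divisor of $A/e$, that is, of the coefficients of all $\lambda^{-1}u_i$. Then $r$ is a constant common divisor of the polynomials $s_i/d=\lambda^{-1}u_i$ in $R[x]$. Since $d$ is an MCD, $r$ must be a unit. Hence $e''$ is an MCD of the fixed finite set $A'\subseteq R$. Nonconstant common divisors play no role here; we only use that constants dividing $S/d$ are units. Because $R$ is MCD-finite, $e''$ ranges over finitely many associate classes. Finally, $d=\lambda g'$ is recovered from $e''$ by multiplying by a fixed element of $K^*$. So associate classes of $e''$ in $R$ determine associate classes of $d$ in $R[x]$, since $R[x]^\times=R^\times$.

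The main obstacle is the normalization in the second step. One must choose the scaling constants so that $e''$ genuinely lies in $R$ and so that the divisibility ``$A/e\subseteq R$'' becomes ordinary divisibility in $R$ of a fixed finite set. This must hold simultaneously with the condition $d\in R[x]$. We would first try multiplying everything by $b_0$, using that $e b_0/c'$ is a coefficient of $d$ times $c$, hence lies in $R$. We would then check that $A'$ can be taken independent of $\lambda$.
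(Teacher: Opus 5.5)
Your argument is correct and is essentially the paper's proof. Both sort the MCDs into finitely many $K[x]$-associate classes: you use the monic divisors of $\gcd_{K[x]}(S)$, the paper uses the divisors of $f_1(x)$. Both then, inside one class, send each MCD to an MCD of a fixed finite subset of $R$, and use $R[x]^\times=R^\times$ to keep non-associate MCDs non-associate.

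The normalization you flag as the main obstacle is immediate, so no rescaling by $b_0$ is needed. (Also, your identity should read $(b_0A)/e'=cc'\,(A/e)$.) Since $g'$ is monic, $\lambda$ is the leading coefficient of $d\in R[x]$, so $e=c\lambda\in R$. Then $e$ itself is an MCD of $A$ in $R$, and $d\mapsto c\lambda$ is injective on associate classes. This is the same device as the paper's observation that $f_1r_m=(a_1r_m)b_1\in R$, because $a_1r_m$ is the leading coefficient of $r_ma(x)$.
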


\begin{proof}
  Let $F$ be the field of fractions of $R$. Note that $F[x]$ is a Euclidean domain, so it is a UFD and every element of $F[x]$ has finitely many divisors up to associates. Since the units of $F[x]$ are precisely $F^\times$ and $a(x) \mid_{R[x]} b(x)$ implies $a(x) \mid_{F[x]} b(x)$, every element of $R[x]$ has finitely many divisors in $R[x]$ up to multiplication by an element of $F$.

  Assume for contradiction that there exists a set $S$ consisting of elements $f_k(x) \in R[x]$ for $k \in \ldb 1, n \rdb$ with infinitely many MCDs. Let $D$ denote the set of divisors of $f_1(x)$ in $F[x]$ such that no two elements of $D$ are associate, and let $\phi$ denote the map from MCDs $m(x)$ of $S$ to elements in $D$ such that $\phi(m(x)) \sim_{F[x]} m(x)$. By the pigeonhole principle, since there are finitely many elements in~$D$, there exist infinitely many MCDs of $S$ that are associate in $F[x]$.

  Let $a(x)$ be an MCD, and let $(r_m)_{m \ge 1}$ be a sequence of elements in $F$ such that the polynomials $r_m a(x)$ are also MCDs of $S$. For each $k \in \ldb 1, n \rdb$, define $b_k(x) \in R[x]$ such that $f_k(x) = a(x) b_k(x)$ and let $B \subseteq R$ be the union of the set of coefficients of $b_k(x)$. Since $a(x)$ is an MCD, there is no nonunit common divisor of $B$ in $R$ (otherwise $a(x) d$ would be a common divisor). Similarly, $B(r_m)^{-1} \subseteq R$ has no nonunit common divisor in $R$. Let $a_1$, $b_1$, and $f_1$ be the leading coefficients of $a(x)$, $b_1(x)$, and $f_1(x)$, respectively. Then we have $f_1 = a_1b_1$.

  We will now construct infinitely many MCDs of $f_1 B \subseteq R$, which contradicts that $R$ is MCD-finite. For all $m \in \nn$, we have $f_1 r_m = a_1b_1r_m = (a_1 r_m)b_1 \in R$ because $a_1 r_m$ is the leading coefficient of $r_m a(x) \in R[x]$. Also, we see that $f_1B (f_1 r_m)^{-1} = B(r_m)^{-1} \subseteq R$, and this subset has no nonunit common divisor in $R$; hence $f_1 r_m$ is an MCD of $f_1 B$. Since the $r_m$ are pairwise non-associate in $R$, the elements $f_1 r_m$ are not associate. Therefore there are infinitely many MCDs, as desired.
\end{proof}

\medskip
\subsection{The q-GCD Property}

We conclude this paper by considering the ascent of the q-GCD property, which is a specialization of the MCD-finite property. Recall that a cancellative commutative monoid is called a q-GCD monoid if every nonempty finite subset has at most one MCD, and an integral domain is called a q-GCD domain provided that its multiplicative monoid is a q-GCD monoid. Our primary purpose in this subsection is to argue that the q-GCD property does not ascend to monoid algebras over fields. First, we prove the following lemma.

\begin{lemma} \label{lem:CD Puiseux monoids are q-GCD}
  Let $M$ be an additive submonoid of $\qq_{\ge 0}$. If every nonempty finite subset of $M^\bullet$ has a common divisor in $M^\bullet$ then $M$ is a q-GCD monoid.
\end{lemma}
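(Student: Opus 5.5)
The plan is to show that, under the stated hypothesis, an MCD of a nonempty finite subset $S$ of $M$ is forced to be an element of $S$ that divides every element of $S$; uniqueness then follows from antisymmetry of divisibility in a reduced monoid. First I note that $M$, being an additive submonoid of $\qq_{\ge 0}$, is reduced: if $u \in M$ has an inverse in $M$, then $u \ge 0$ and $-u \ge 0$, so $u = 0$. Consequently associates in $M$ are equal, and $b \mid_M c$ implies $b \le c$. It therefore suffices to prove that every nonempty finite subset $S$ of $M$ has at most one MCD as an element.

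The first case is $0 \in S$. Every common divisor $d$ of $S$ satisfies $d \mid_M 0$, so $d \le 0$ and hence $d = 0$. Thus $0$ is the only possible MCD of $S$ (and it is indeed one).

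The main case is $S \subseteq M^\bullet$. Let $d$ be an MCD of $S$, and consider the finite set $S - d = \{s - d : s \in S\} \subseteq M$. By the definition of an MCD, the only common divisor of $S - d$ in $M$ is $0$. Suppose every element of $S - d$ were nonzero. Then $S - d$ would be a nonempty finite subset of $M^\bullet$, and the hypothesis of Lemma~\ref{lem:CD Puiseux monoids are q-GCD} would give a common divisor of $S-d$ lying in $M^\bullet$, which is a contradiction. Hence $s_0 - d = 0$ for some $s_0 \in S$; that is, $d = s_0 \in S$, and $d$ divides every element of $S$.

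Now let $d'$ be another MCD of $S$. By the same argument, $d' \in S$ and $d'$ divides every element of $S$. In particular $d \mid_M d'$ and $d' \mid_M d$, so $d \le d' \le d$ and therefore $d = d'$. So $S$ has at most one MCD, and $M$ is a q-GCD monoid.

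There is no serious obstacle here. The only point requiring care is the observation that $S - d$ must contain $0$, since otherwise the hypothesis applies to it. This is exactly where the hypothesis is used, and it must be applied to $S - d$ rather than to $S$ itself.
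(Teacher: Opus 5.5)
Your proof is correct and follows essentially the same argument as the paper: you apply the hypothesis to $S-d$ to force $d \in S$, and then conclude uniqueness. The only difference is phrasing: the paper says $d = \min S$, while you use mutual divisibility in the reduced monoid $M$.
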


\begin{proof}
  Let $S = \{s_1, \dots, s_n\}$ be an arbitrary finite nonempty subset of $M$. We want to show that $|\mcd_M(S)| \le 1$. If $0 \in S$ then the only common divisor of $S$ is $0$, and so $0$ is the only MCD of $S$ in~$M$. Therefore we assume that $0 \notin S$. In addition, we can assume that $S$ has at least one MCD since otherwise $|\mcd_M(S)| \, \le 1$. Let $d$ be an MCD of $S$ in $M$, and let us argue the following claim.
  \smallskip

  \noindent \textsc{Claim.} $d \in S$.
  \smallskip

  \noindent \textsc{Proof of Claim.} Assume for the sake of contradiction that $d \notin S$. For each $i \in \ldb 1, n \rdb$, the inequality $s_i - d > 0$ holds because $d$ properly divides $s_i$ in $M$. Set
  \[
    S' = \big\{ s_i - d : i \in \ldb 1, n \rdb \big\}.
  \]
  By assumption, $S'$ has a nonzero common divisor $c \in M$. However, the fact that $d$ properly divides $c+d$ in $M$ contradicts the maximality of $d$ as a common divisor of $S$. Hence $d \in S$.
  \smallskip

  Because $d$ is a common divisor of $S$ that belongs to $S$, we obtain that $d = \min S$. Thus, the only MCD of a nonempty finite subset of $M$ must be its minimum, which completes the proof.
\end{proof}

It turns out that the q-GCD property does not ascend to monoid algebras over fields even in the class of rank-$1$ torsion-free monoids. We conclude with the following theorem, which proves this statement.

\begin{theorem}
  For every prime $p \in \pp$, there exists a rank-$1$ cancellative and torsion-free monoid $M$ such that $M$ is a q-GCD monoid but $\ff_p[M]$ is not a q-GCD domain.
\end{theorem}

\begin{proof}
  Fix $p \in \pp$. The first step of our proof is to construct an additive submonoid $M$ of the nonnegative cone of $\qq$ that is a q-GCD monoid. For this, pick an odd prime $q$ with $q \neq p$ and let~$V$ be the nonnegative cone of the additive abelian group $\zz[1/p]$, which is a valuation monoid. Here we denote the linearly ordered abelian group $\zz[1/p, 1/q]$ by $G$, and we let $W$ denote the nonnegative cone of $G$. Now consider the following additive monoid:
  \[
    M := V \cup G_{\ge 1},
  \]
  which is an intermediate monoid of the monoid extension $V \subset W$. Because $W$ is an overring of $M$, the monoid algebra $\ff_p[W]$ is an overring of $\ff_p[M]$. Note that $\ff_p[W]$ is a GCD domain because $W$ is a valuation monoid.
  \smallskip

  We can readily verify that every positive element of $M$ is divisible in $M$ by an element of the form~$1/p^n$ for some $n \in \nn_0$. Indeed, if a positive element $m \in M$ satisfies $m \le 1$ then $m \in V$ and so $m = a/p^n$ for some $(a,n) \in \nn \times \nn_0$, in which case $1/p^n \mid_M m$. On the other hand, if $m \in M_{>1}$ then we can take $n \in \nn$ large enough so that $1/p^n < m-1$ to obtain that $m - 1/p^n > 1$ and so the fact that $m - 1/p^n \in W$ ensures that $m - 1/p^n \in M_{>1} \subseteq M$, whence $1/p^n \mid_M m$. Therefore, for any finite nonempty subset $S$ of $M \setminus \{0\}$, we can pick $n \in \nn_0$ large enough so that $1/p^n$ is a common divisor of~$S$ in~$M$. Thus, it follows from Lemma~\ref{lem:CD Puiseux monoids are q-GCD} that $M$ is a q-GCD monoid.
  \smallskip

  Now we can focus on proving that the monoid algebra $\ff_p[M]$ is not a q-GCD domain. Let $k$ be a positive integer such that $p^{k-1} > q$, and consider the following polynomial expressions of $\ff_p[M]$:
  \[
    s_1(x) := x(1+x^{\frac{1}{pq}})^{p^k+1}(1+x^{\frac{1}{pq}} + x) \quad \text{ and } \quad s_2(x) := x^2(1+x^{\frac{1}{pq}})^2(1+x^{\frac{1}{pq}}+x).
  \]
  We will prove that the set $S:=\{s_1(x),s_2(x)\}$ has two non-associate MCDs in $\ff_p[M]$. Since the only units of $R$ are the nonzero elements of $\ff_p$, it suffices to show that
  \[
    g(x) := x(1+x^{\frac{1}{pq}})(1+x^{\frac{1}{pq}}+x) \quad \text{ and } \quad h(x) := x(1+x^{\frac{1}{pq}})^2
  \]
  are non-associate MCDs of $S$ in $\ff_p[M]$. Observe that $g(x), h(x) \in \ff_p[M]$ because $M_{\ge 1} = W_{\ge 1}$. It follows from the inequality $\deg g(x) > \deg h(x)$ that $g(x) \nmid_{\ff_p[M]} h(x)$. In addition, $1 + x^{1/pq} \nmid_{\ff_p[M]} 1 + x^{1/pq} + x$ since otherwise, we could take $A(x) \in \ff_p[M]$ such that $x = \big(1 + x^{1/pq}\big)\big(A(x)-1\big)$, whence $h(x) \nmid_{\ff_p[M]} g(x)$. Hence $g(x)$ and $h(x)$ are not associates in $\ff_p[M]$.
  \smallskip

  Next, we argue that $g(x)$ is an MCD of $S$ in $\ff_p[M]$. To this end, first observe that $g(x)$ is a common divisor of $S$ because
  \[
    \frac{s_1(x)}{g(x)} = \big( 1+x^{\frac1{pq}} \big)^{p^k} = 1 + x^{\frac{p^{k-1}}{q}} \in \ff_p[M] \quad \text{and} \quad \frac{s_2(x)}{g(x)} = x\big(1+x^{\frac1{pq}}\big) \in \ff_p[M],
  \]
  where the equality on the left follows from the Frobenius endomorphism and the first containment follows from the inequality $p^{k-1} > q$. To prove that $g(x)$ is maximal among the common divisors of $S$, observe that, inside the larger GCD domain~$\ff_p[W]$,
  \[
    \gcd\nolimits_{\ff_p[W]} \bigg( \frac{s_1(x)}{g(x)}, \frac{s_2(x)}{g(x)} \bigg) = \gcd\nolimits_{\ff_p[W]}\Big( 1 + x^{\frac{p^{k-1}}{q}}, \, x\big(1+x^{\frac{1}{pq}} \big)\Big) = 1 + x^{\frac{1}{pq}}.
  \]
  As $\ff_p[W]$ is a GCD domain, every common divisor of $S/g(x)$ in $\ff_p[M]$ must divide $1+x^{1/pq}$ in $\ff_p[W]$. Let $a(x) \in \ff_p[M]$ be such a common divisor. Since $a(x) \mid_{\ff_p[W]} 1+x^{1/pq}$, the largest exponent in $\operatorname{supp} a(x)$ is at most $1/pq$. Since $1/pq<1$, it follows that $\operatorname{supp} a(x) \subseteq V$. Take $\ell \in \nn$ such that $a\big(x^{p^\ell}\big) \in \ff_p[x]$. Because $a\big(x^{p^\ell q}\big)$ has integral exponents and nonzero constant term, the divisibility relation $a(x) \mid_{\ff_p[W]} 1+x^{1/pq}$ yields
  \[
    a\big(x^{p^\ell q}\big) \mid_{\ff_p[x]} 1+x^{p^{\ell-1}}=(1+x)^{p^{\ell-1}}.
  \]
  Because $\ff_p[x]$ is a UFD, there exist $u \in \ff_p^\times$ and $m \in \ldb 0,p^{\ell-1} \rdb$ such that $a\big(x^{p^\ell q}\big)=u(1+x)^m$. The left-hand side belongs to $\ff_p[x^q]$, while $(1+x)^m$ belongs to $\ff_p[x^q]$ only when $m=0$, as $q \neq p$. Hence $a\big(x^{p^\ell q}\big)$ is constant, and so $a(x)$ is constant. Thus $a(x) \in \ff_p[M]^\times$, and $g(x)$ is an MCD of $S$ in $\ff_p[M]$, as desired.

  Let us turn to verify that $h(x)$ is also an MCD of $S$ in $\ff_p[M]$. To argue that $h(x)$ is a common divisor of~$S$, we first notice that
  \[
    \frac{s_1(x)}{h(x)} = (1+x^{\frac1{pq}})^{p^k-1}(1+x^{\frac1{pq}}+x) = 1+x^{\frac{p^{k-1}}q} + x(1+x^{\frac1{pq}})^{p^k-1},
  \]
  and so the inequality $p^{k-1}/q > 1$ and the fact that the elements in the support of $x(1+x^{1/pq})^{p^k-1}$ are at least~$1$, ensures that $s_1(x)/h(x) \in \ff_p[M]$. In addition, note that all the exponents in the support of $s_2(x)/h(x) = x(1+x^{1/pq}+x)$ are at least~$1$, whence $s_2(x)/h(x) \in \ff_p[M]$. As a consequence, $h(x)$ is a common divisor of~$S$ in $\ff_p[M]$. In order to prove the maximality of $h(x)$. We first note that inside the GCD domain $\ff_p[W]$,
  \[
    \gcd\nolimits_{\ff_p[W]}\bigg( \frac{s_1(x)}{h(x)}, \frac{s_2(x)}{h(x)} \bigg) = 1 + x^{\frac1{pq}} + x,
  \]
  and so every common divisor of $S/h(x)$ in $\ff_p[M]$ divides $1+x^{1/pq}+x$ in $\ff_p[W]$. Let $d(x) \in \ff_p[M]$ be one of these common divisors. As $d(x)$ divides $1+x^{1/pq}+x$ in $\ff_p[W]$, the largest exponent in $\text{supp}\, d(x)$ is at most~$1$, whence $\text{supp}\, d(x) \subseteq \nn_0[1/p]$. This allows us to take $j \in \nn_0$ such that $d\big(x^{p^j}\big) \in \ff_p[x]$. From the divisibility relation $d(x) \mid_{\ff_p[W]} 1+x^{1/pq}+x$, we obtain
  \[
    d(x^{p^{j+1}q}) \mid_{\ff_p[x]} P(x) := x^{p^{j+1}q} + x^{p^j} + 1.
  \]
  We claim that $P(x) \in \ff_p[x]$ has no nonconstant divisors in $\ff_p[x^q]$. Assume, towards a contradiction, that $e(x^q) \in \ff_p[x^q]$ is a nonconstant divisor of $P(x)$ in $\ff_p[x]$. Since $e(x^q)$ is nonconstant, it has a root~$\alpha$ in some extension field of $\ff_p$. Then $P(\alpha)=0$, and $\alpha \neq 0$ because $P(0)=1$. Let $\zeta$ be a primitive $q$-th root of unity. Since $e((\alpha\zeta)^q)=e(\alpha^q)=0$, the element $\alpha\zeta$ is also a root of $P(x)$. Hence
  \begin{align*}
    \alpha^{p^j}(\zeta^{p^j}-1)
      &= \big((\alpha\zeta)^{p^{j+1}q}+ (\alpha\zeta)^{p^j}+1\big) - \big(\alpha^{p^{j+1}q}+\alpha^{p^j}+1\big) \\
      &= P(\alpha\zeta)-P(\alpha) = 0.
  \end{align*}
  Since $\alpha \neq 0$, it follows that $\zeta^{p^j}=1$, which is impossible because $\zeta$ has order $q$ and $q \nmid p^j$. Therefore $P(x)$ has no nonconstant divisors in $\ff_p[x^q]$. Since $d(x^{p^{j+1}q}) \in \ff_p[x^q]$ divides $P(x)$ in $\ff_p[x]$, the polynomial $d(x^{p^{j+1}q})$ is constant, and so $d(x)$ is constant.

  Hence $S$ has two non-associate MCDs in $\ff_p[M]$, whence the monoid algebra $\ff_p[M]$ is not a q-GCD domain.
\end{proof}

\bigskip
\section*{Acknowledgments}

During the preparation of this paper, the authors were part of PRIMES-USA at MIT, and they would like to thank the program for making possible the rewarding research experience that led to this paper. The second author kindly acknowledges support from the NSF under the award DMS-2213323.

\bigskip

\end{document}